\documentclass[12pt]{amsart}
\usepackage{amsmath}
\usepackage{amssymb}
\usepackage{amsfonts} 
\usepackage{mathrsfs}
\usepackage{mathtools}

   \newcommand{\Ndb}{\mbox{$\mathbb{N}$}}
   
   \newcommand{\Pdb}{\mbox{$\mathbb{P}$}}

   \newcommand{\Tdb}{\mbox{$\mathbb{T}$}}

   \newcommand{\C}{\mbox{${\mathcal C}$}}

\newcommand{\norm}[1]{\Vert#1\Vert}
\newcommand{\bignorm}[1]{\bigl\Vert#1\bigr\Vert}
\newcommand{\Bignorm}[1]{\Bigl\Vert#1\Bigr\Vert}
\newcommand{\biggnorm}[1]{\biggl\Vert#1\biggl\Vert}

\newtheorem{theorem}{Theorem}[section]
\newtheorem{lemma}[theorem]{Lemma}
\newtheorem{corollary}[theorem]{Corollary}
\newtheorem{proposition}[theorem]{Proposition}

\theoremstyle{remark}
\newtheorem{remark}[theorem]{\bf Remark}
\theoremstyle{definition}

\numberwithin{equation}{section}

\author[C. Le Merdy]{Christian Le Merdy}
\email{clemerdy@univ-fcomte.fr}
\address{Laboratoire de Math\'ematiques de Besan\c con, UMR 6623,
CNRS, Universit\'e Marie et Louis Pasteur,
25030 Besan\c{c}on Cedex, France}
\author[S. Zadeh]{Safoura Zadeh}
\email{jsafoora@gmail.com}
\address{Institute of Mathematics, University of Bristol, U.K.}

\begin{document}

\title[Random ergodic averages]{Pointwise Convergence for Random Ergodic 
Averages in Non-commutative \(L^p\)-spaces}

\begin{abstract} 
 Let \(M\) be a semifinite von Neumann algebra and \(T\) a positive contraction on both \(L^1(M)\) and \(L^\infty(M)\). We consider ergodic averages along a random sparse subsequence determined by independent Bernoulli variables \((X_n)_{n\geq 1}\) with \(\mathbb{P}(X_n = 1) = n^{-\alpha}\), and set \(W_N = \sum_{n=1}^N \mathbb{E}[X_n]\). We prove that, almost surely, the averages  \(\frac{1}{W_N} \sum_{n=1}^N X_n\, T^n(x)\) converge bilaterally almost uniformly to the ergodic projection for all \(1 < p < \infty\). This extends a theorem of Bourgain to the non-commutative setting.
\end{abstract}

\maketitle

\noindent
{\it 2020 Mathematics Subject Classification: Primary 46L53, 46L55; Secondary 46L51, 37A99}

\smallskip
\noindent
{\it Key words:}  random sets; individual ergodic theorems; non-commutative \(L^p\)-spaces.

\bigskip
\section{Introduction}\label{1}

In the 1930s, von Neumann proved a monumental mean ergodic theorem, establishing convergence in \(L^2\), while Birkhoff obtained a pointwise ergodic theorem, which guarantees almost everywhere convergence and is often referred to as the individual ergodic theorem. Both results admit a common formulation: for a measure-preserving transformation \(T\), the ergodic averages
\[
A_N(f) := \frac{1}{N}\sum_{n=1}^{N} T^n(f)
\]
converge almost everywhere for all \(f \in L^1\), and converge in \(L^p\) for \(1 < p < \infty\); in general, \(L^1\)-norm convergence fails.
\smallskip

The almost everywhere convergence result was later established within the Dunford--Schwartz ergodic theorem for a broad class of positive operators \(T\), namely positive contractions on both \(L^1\) and \(L^\infty\) (see, for example, \cite[Section 3, Theorem 6]{DS} and \cite[Section 11.1]{EFHN}). A celebrated theorem of Akcoglu and Sucheston \cite{AS} extends this result to all positive contractions on \(L^p\), \(1 < p < \infty\), via a dilation argument. For further details, see Chapters~1 and~5 of \cite{Kren}, along with the notes at the end of those chapters. 

\smallskip
A major direction of research since Birkhoff has been the study of ergodic averages along sparse arithmetic sets. A fundamental example arises when considering polynomial iterates such as  \[ \frac{1}{N}\sum_{n=1}^{N} T^{n^2}(f).\] The increasing gaps and arithmetic rigidity of such sequences prevent direct application of classical ergodic methods. In a seminal work, Bourgain \cite{B2} (see also \cite{B,B7,B5}) established almost everywhere convergence for polynomial sequences with integer coefficients using harmonic-analytic techniques based on discrete Fourier multipliers given by oscillatory exponential sums. These ideas revealed that cancellation in arithmetic phases compensates for sparsity and allows maximal estimates to control pointwise behaviour. A result of comparable nature is the theorem of Bourgain \cite{B5} and Wierdl \cite{W}, which establishes the pointwise ergodic theorem along prime numbers for functions in \( L^p \), \( p > 1 \).
\smallskip

Given these developments, it is natural to ask what happens when the averaging set is no longer deterministic but random. A standard model is provided by sparse random sequences, for instance independent Bernoulli variables \((X_n)\) with \(\mathbb{P}(X_n = 1) = n^{-1/2}\), and the associated random increasing sequence of hitting times
\[
n_k := \min\{N : X_1 + \cdots + X_N = k\}.
\]
In this case, the counting function \(\sum_{n=1}^N X_n\) grows on the order of \(\sqrt{N}\). One may then ask whether the corresponding ergodic averages along this random subsequence,
\[
\frac{1}{m} \sum_{k=1}^m T^{n_k}(f),
\]
still converge almost surely. Bourgain \cite[Appendix~1]{B} showed that, under suitable conditions, almost sure convergence persists even along sufficiently sparse random subsequences.
\medskip

The connection between ergodic theory and von Neumann algebras goes back to the early development of operator algebras. In the non-commutative framework, functions are replaced by operators affiliated with a von Neumann algebra, and classical \(L^p\)-spaces by their non-commutative counterparts. The first ergodic theorem here is due to Lance, who proved an ergodic theorem for trace-preserving \(*\)-automorphisms on a von Neumann algebra. This was sharpened by Yeadon in 1977, who established a non-commutative version of Birkhoff’s pointwise ergodic theorem. Building on these developments, Junge and Xu \cite{JX} obtained a non-commutative extension of the Dunford--Schwartz ergodic theorem. As pointwise convergence no longer makes sense without an underlying space, it is replaced by bilateral almost uniform (b.a.u.) convergence \cite{Y}, in the spirit of Egorov's theorem. Roughly speaking, this means convergence is uniform outside a projection of arbitrarily small trace.
\smallskip

As in the classical theory, maximal inequalities remain central to the study of ergodic behaviour. In the non-commutative setting, however, the absence of a total order rules out any direct notion of a pointwise supremum. To circumvent this, Junge \cite{Ju}, extending Pisier’s construction \cite{P2}, introduced $\ell^\infty$-valued non-commutative $L^p$-spaces, which provide a workable substitute for pointwise suprema and a natural setting for maximal inequalities. The connection with b.a.u. convergence is crucial: much as maximal inequalities can lead to almost everywhere convergence in the commutative case, bounds in these $\ell^\infty$-valued spaces can yield b.a.u. convergence in the non-commutative setting. This principle, introduced in \cite{JX}, is reflected in the closed subspace \(L^p(M;c_0)\) of \(L^p(M;\ell^\infty)\), consisting of sequences that vanish at infinity.
\smallskip

Not every result or method carries over to the non-commutative setting. As noted above, in the commutative case Akcoglu’s dilation theorem allows one to extend the Dunford--Schwartz theorem to general positive contractions. In the non-commutative framework, however, such dilation results fail in general; a counterexample is given in \cite{JLM}. Notably, this example still satisfies a maximal inequality \cite[7.2]{HRW}. 
\smallskip

A dilation is nevertheless available for a restricted class of positive contractions in the noncommutative setting \cite{HRW}, namely those that are separating (i.e. disjointness-preserving), also called Lamperti-type. In the commutative setting these coincide with weighted composition operators, and in the non-commutative case they admit an analogous rigid structural description \cite{HRW,LMZ}. For this class, the dilation method continues to yield both maximal inequalities and norm convergence of the ergodic averages.
\smallskip

With non-commutative analogues of the Birkhoff and Dunford--Schwartz ergodic theorems in place, it is natural to look for a counterpart of Bourgain’s theorem on ergodic averages along polynomial times. In recent work \cite{CHW}, a non-commutative sampling principle is introduced, allowing discrete Fourier multiplier estimates to be transferred to the operator-valued setting. This leads to a maximal inequality for ergodic averages along certain polynomial (arithmetic) sequences in non-commutative \(L^p\)-spaces for \(p > \frac{\sqrt{5}+1}{2}\), extending the harmonic-analytic component of Bourgain’s argument \cite{B7} to the non-commutative framework. However, unlike in the commutative case, it is unknown whether these maximal inequalities imply b.a.u. convergence. Consequently, obtaining a full noncommutative pointwise ergodic theorem by this method remains open. The results of the present paper can be viewed as a step in this direction, establishing b.a.u. convergence for a class of random sparse sequences.
\smallskip

Let \(M\) be a semifinite von Neumann algebra. In this paper, we consider the same random subsequences described above, namely those given by the hitting times \(n_k := \min\{N : X_1 + \cdots + X_N = k\}\). For a positive contraction on both \(M(=L^\infty(M))\) and \(L^1(M)\), we study the corresponding ergodic averages
\begin{equation}\label{RA}
\frac{1}{m} \sum_{k=1}^m T^{n_k}(x),
\end{equation}
and show that for every \(1<p<\infty\) and every \(x\in L^p(M)\) they converge bilaterally almost uniformly to the ergodic projection.  In contrast to the deterministic setting of polynomial times, the argument avoids Fourier-analytic and maximal inequality techniques, relying instead on concentration inequalities. 
\smallskip

The paper is organised as follows. In Section~\ref{2}, we introduce the non-commutative \(L^p\)-framework and collect the main tools used throughout. Section~\ref{3} describes the random model based on the hitting times of the Bernoulli sequence introduced above and establishes its basic probabilistic properties, together with a convenient reformulation of the associated ergodic averages. In Section~\ref{4}, we prove almost sure convergence in \(L^p\)-norm for the random ergodic averages \eqref{RA}. In fact, this convergence holds in the more general setting of an arbitrary reflexive Banach space with non-trivial type for any contraction, not only in \(L^p\)-spaces. Finally, in Section~\ref{5}, we establish b.a.u. convergence for \eqref{RA}. Our results remain valid for non-tracial von Neumann algebras.

\section{Preliminaries on tracial non-commutative $L^p$-spaces}\label{2}

We begin with a brief overview of non-commutative $L^p$-spaces associated with a trace; for further details, see \cite{Hiai,PX}. Let $M$ be a semifinite von Neumann algebra equipped with a normal semifinite faithful trace
$\tau \colon M_+ \to [0,\infty]$. The pair $(M,\tau)$ is called a tracial von Neumann algebra, and we assume that $M \subset B(H)$, acts on a Hilbert space $H$.

Denote by $L^0(M)$ the $*$-algebra of all closed densely defined (possibly unbounded) operators on $H$ that are $\tau$-measurable, and extend $\tau$ to the positive cone $L^0(M)_+$. For $1 \le p < \infty$, the associated non-commutative $L^p$-space is given by
$$ L^p(M) := \{x \in L^0(M) \,:\, \tau(|x|^p) < \infty\}.$$
Equipped with the norm 
$\|x\|_p := \tau(|x|^p)^{1/p}$, $L^p(M)$ is a Banach space. 
For convenience, we set $L^\infty(M) := M$ and let $\|\cdot\|_\infty$ 
denote the operator norm.

For any $x\in L^p(M)$, we have $x^*\in L^p(M)$, with $\norm{x^*}_p=\norm{x}_p$.
We write $L^p(M)_+$ for the set of positive elements in $L^p(M)$. An operator $T \colon L^p(M) \to L^p(M)$ is called positive if it maps $L^p(M)_+$ into itself.

Consider $1 \le p, q, r \le \infty$ with $r^{-1} = p^{-1} + q^{-1}$. For $x \in L^p(M)$ and $y \in L^q(M)$, the product $xy$ lies in $L^r(M)$ and satisfies $\|xy\|_r \le \|x\|_p \|y\|_q$ (Hölder's inequality). The trace $\tau$ extends to a contractive functional on $L^1(M)$, and in the case $q = p' := \frac{p}{p-1}$, $r = 1$, we have $|\tau(xy)| \le \|x\|_p \|y\|_{p'}$. 
If $p<\infty$, the duality pairing
$$
\langle y, x \rangle = \tau(xy), \qquad x \in L^p(M),\ y \in L^{p'}(M),
$$
yields an isometric identification $L^p(M)^* \simeq L^{p'}(M)$. Moreover, $L^2(M)$ is a Hilbert space with inner product 
$(x \vert y)_{L^2} = \tau(y^* x)$ for $x, y \in L^2(M)$.  Finally, for any $1 \le p \le \infty$, 
\begin{equation}\label{2Inter}
L^p(M) = [L^\infty(M), L^1(M)]_{\frac{1}{p}},
\end{equation}
where the right-hand side is defined via complex interpolation. These properties will be used freely throughout the sequel.

Let $1 \le p < \infty$. Following \cite{Ju,JX}, we define $L^p(M;\ell^\infty)$ as the space of all sequences $(u_n)_{n \ge 1}$ in $L^p(M)$ for which there exist $a, b \in L^{2p}(M)$ and a bounded sequence $(v_n)_{n \ge 1}$ in $M$ such that
$$
u_n = a v_n b, \qquad n \ge 1.
$$
For such a sequence, we set
$$
\bigl\|(u_n)_{n \ge 1}\bigr\|_{L^p(M;\ell^\infty)} =
\inf \Bigl\{ \|a\|_{2p} \, \sup_n \|v_n\|_\infty \, \|b\|_{2p} \Bigr\},
$$
where the infimum runs over all possible factorisations of $(u_n)_{n \ge 1}$ in the above form. Equipped with this norm, $L^p(M;\ell^\infty)$ is a Banach space. It is observed in \cite[Remark 3.7]{Ju} and \cite[Section 2]{JX} that if $(u_n)_{n \ge 1}$ is a sequence in $L^p(M)_+$, then $(u_n)_{n \ge 1} \in L^p(M;\ell^\infty)$ if and only if there exists $c \in L^p(M)_+$ such that $u_n \le c$ for all $n \ge 1$. Moreover, in this case, the $L^p(M;\ell^\infty)$-norm of $(u_n)_{n \ge 1}$ coincides with the infimum of $\|c\|_p$ over all such dominating elements $c$. The following is a version of this result for sequences of self-adjoint elements.

\begin{lemma}\label{2sa}
Let $U=(u_n)_{n\geq 1}$ be a sequence of self-adjoint 
elements of $L^p(M)$.
\begin{itemize}
\item [(i)] If $U \in L^p(M;\ell^\infty)$ with $\norm{U}_{L^p(M;\ell^\infty)} < C$ for some $C>0$, then there exists $c \in L^p(M)_+$ with $\norm{c} < C$ such that 
$$
-c \leq u_n \leq c,\qquad n \geq 1.
$$
\item [(ii)] Suppose that there exist $c,d\in L^p(M)_+$ such that 
$$
-d\leq u_n\leq c,\qquad n\geq 1.
$$
Then $U\in L^p(M;\ell^\infty)$ and $\norm{U}_{L^p(M;\ell^\infty)}\leq \norm{c+2d}_p$.
\end{itemize}
\end{lemma}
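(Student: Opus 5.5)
For (i) I would symmetrise a factorisation and use the elementary operator inequality
$$
\pm\,(X^*Y+Y^*X)\le X^*X+Y^*Y ,
$$
which follows from $(X\mp Y)^*(X\mp Y)\ge 0$ and is valid for $\tau$-measurable operators. Since $\norm{U}_{L^p(M;\ell^\infty)}<C$, I pick a factorisation $u_n=av_nb$ with $\norm{a}_{2p}\sup_n\norm{v_n}_\infty\norm{b}_{2p}<C$. Rescaling $a$, $b$ and $v_n$ by positive scalars does not change this product, so I may assume $\sup_n\norm{v_n}_\infty\le 1$ and $\norm{a}_{2p}=\norm{b}_{2p}$. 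Because $u_n$ is self-adjoint,
$$
u_n=\tfrac12\bigl(av_nb+b^*v_n^*a^*\bigr).
$$
I apply the inequality with $X=a^*$ and $Y=v_nb$, and use $b^*v_n^*v_nb\le b^*b$ (since $\norm{v_n}_\infty\le1$). This gives $-c\le u_n\le c$ with
$$
c=\tfrac12\,(aa^*+b^*b)\in L^p(M)_+ .
$$
By the triangle inequality and $\norm{aa^*}_p=\norm{a}_{2p}^2$, $\norm{b^*b}_p=\norm{b}_{2p}^2$,
$$
\norm{c}_p\le\tfrac12\bigl(\norm{a}_{2p}^2+\norm{b}_{2p}^2\bigr)=\norm{a}_{2p}\norm{b}_{2p}<C .
$$

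For (ii) I set $e=c+2d\in L^p(M)_+$. The hypothesis $-d\le u_n\le c$ gives
$$
0\le u_n+d\le c+d\le e\quad\text{for all } n\ge1,\qquad\text{and}\qquad 0\le d\le e .
$$
I then use the standard factorisation behind the positive-sequence characterisation quoted from \cite{Ju,JX}: if $0\le w\le e$, then $w=e^{1/2}ve^{1/2}$ for some contraction $0\le v\in M$. When $e$ is not invertible, this is obtained by working on the support projection of $e$, or by a weak$^*$ limit of $(e+\varepsilon)^{-1/2}w(e+\varepsilon)^{-1/2}$ as $\varepsilon\to0$. This gives positive contractions $v_n,w\in M$ with
$$
u_n+d=e^{1/2}v_ne^{1/2},\qquad d=e^{1/2}we^{1/2},
$$
and therefore $u_n=e^{1/2}(v_n-w)e^{1/2}$.

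Since $v_n$ and $w$ are positive contractions, $-1\le -w\le v_n-w\le v_n\le 1$, so $\norm{v_n-w}_\infty\le1$. With $a=b=e^{1/2}\in L^{2p}(M)$ this is an admissible factorisation, and $\norm{e^{1/2}}_{2p}^2=\norm{e}_p$. Hence $\norm{U}_{L^p(M;\ell^\infty)}\le\norm{c+2d}_p$.

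The main technical point is the factorisation $w=e^{1/2}ve^{1/2}$ when $e$ is non-invertible and unbounded. I would take this directly from \cite[Remark 3.7]{Ju} or \cite[Section 2]{JX} rather than reprove it.
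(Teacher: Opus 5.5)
Your proof is correct, and it reaches the conclusion by a somewhat different mechanism than the paper. The paper proves both parts with the ``factorisation principle'' from the proof of the triangle inequality in $L^p(M;\ell^\infty)$. In (i) it merges the factorisations of $u_n$ and $u_n^*$ into $u_n=c^{1/2}w_nc^{1/2}$ with $c=\frac12(aa^*+b^*b)$, then replaces $w_n$ by its real part. In (ii) it merges $u_n+d=(c+d)^{1/2}v_n(c+d)^{1/2}$ with $-d=d^{1/2}(-1)d^{1/2}$, which is where the majorant $c+2d$ comes from.

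In (i) you use the same symmetrisation and obtain the same $c$. You get it directly from the elementary inequality $\pm(X^*Y+Y^*X)\le X^*X+Y^*Y$ in $L^0(M)$, so you never need the factorisation principle. The paper's version additionally records the symmetric factorisation of $u_n$, but that is equivalent to the order bound anyway.

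In (ii) your argument is genuinely different: you factor both $u_n+d$ and $d$ through one common majorant $e$ and subtract. This needs the domination-factorisation in its positive form, with $0\le v\le 1$ rather than an arbitrary contraction. That form is what the standard proof gives (via $w^{1/2}=ze^{1/2}$ and $v=z^*z$), and your weak$^*$-limit argument gives it too. Otherwise you only use the same fact from \cite{JX} that the paper uses.

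Your route also gives a better constant. Since $u_n+d\le c+d$ and $d\le c+d$, you may take $e=c+d$ and obtain $\norm{U}_{L^p(M;\ell^\infty)}\le\norm{c+d}_p$, which implies the stated bound. The paper's route has the advantage of recycling a single standard tool of the $L^p(M;\ell^\infty)$ theory uniformly in both parts.
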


\begin{proof}
Let ${\rm Ball}(M)$ denote the closed unit ball of $M$. 
We will use the following {\it factorisation principle}, employed in the proof of the triangle inequality in $L^p(M;\ell^\infty)$: 

If $(u_n)_{n\ge 1}$ and $(u'_n)_{n\ge 1}$ are sequences in $L^p(M;\ell^\infty)$ with $L^{2p}(M) \cdotp M \cdotp L^{2p}(M)$ factorisations
$$
u_n = a v_n b, \qquad u'_n = a' v'_n b',
$$
where $v_n, v'_n \in {\rm Ball}(M)$, then there exists a sequence $(w_n)_{n\ge 1}$ in ${\rm Ball}(M)$ such that
$$
u_n + u'_n = \bigl(aa^* + a'a'^*\bigr)^{\frac12} w_n \bigl(b^*b + b'^*b'\bigr)^{\frac12}, \qquad n \ge 1.
$$

\smallskip
(i) By assumption, there exist $a, b \in L^{2p}(M)$ and a sequence $(v_n)_{n\ge 1}$ in ${\rm Ball}(M)$ such that 
$\|a\|_{2p} = \|b\|_{2p} < C^{\frac12}$ and $u_n = a v_n b$ for all $n \ge 1$.  
Since $u_n^* = b^* v_n^* a^*$ and $u_n$ is self-adjoint, the factorisation principle gives
$$
u_n = \frac12(u_n + u_n^*) = \Bigl(\frac{aa^* + b^*b}{2}\Bigr)^{\frac12} w_n \Bigl(\frac{aa^* + b^*b}{2}\Bigr)^{\frac12}, \qquad n \ge 1,
$$
for some $w_n \in M$ with $\|w_n\|_\infty \le 1$.  
Replacing $w_n$ by ${\rm Re}(w_n)$, we may assume that $w_n$ is self-adjoint; 
in this case the norm condition is equivalent to $-1 \le w_n \le 1$.  
Setting
$$
c = \frac{aa^* + b^*b}{2},
$$
we then have $-c \le u_n \le c$ for all $n \ge 1$, and
$$
\|c\|_p \le \frac12 (\|aa^*\|_p + \|b^*b\|_p) = \frac12 (\|a\|_{2p}^2 + \|b\|_{2p}^2) < C.
$$

\smallskip
(ii) The assumption can be rewritten as $0 \le u_n + d \le c + d$.  
By the discussion at the beginning of \cite[Section 2]{JX}, this implies
$$
u_n + d = (c + d)^{\frac12} v_n (c + d)^{\frac12}, \qquad n \ge 1,
$$
for some sequence $(v_n)_{n\ge 1}$ in ${\rm Ball}(M)$.  Writing
$$
u_n = (c+d)^{\frac12} v_n (c+d)^{\frac12} + d^{\frac12}(-1)d^{\frac12}, \qquad n \ge 1,
$$
and applying the factorisation principle, we obtain a sequence 
$(w_n)_{n\ge 1}$ in ${\rm Ball}(M)$ such that
$$
u_n = (c+2d)^{\frac12} w_n (c+2d)^{\frac12}, \qquad n \ge 1.
$$
The result follows immediately.
\end{proof}

Following \cite[Section 6]{JX}, we define $L^p(M;c_0)$ as the space of all sequences $(u_n)_{n \ge 1}$ in $L^p(M)$ for which there exist $a, b \in L^{2p}(M)$ and a bounded sequence $(v_n)_{n \ge 1}$ in ${\rm Ball}(M)$, the closed unit ball of $M$, such that
$$
\lim_n \|v_n\|_\infty = 0 \qquad \text{and} \qquad u_n = a v_n b, \quad n \ge 1.
$$
It turns out that $L^p(M;c_0)$ is a closed subspace of $L^p(M;\ell^\infty)$.

In the sequel, for $n_0 \ge 1$, we identify $(u_n)_{n \ge n_0}$ with the sequence
$$
(0, \ldots, 0, u_{n_0}, u_{n_0+1}, \ldots),
$$
where all entries with index $\le n_0 - 1$ are set to zero.  
Clearly, the norm of $(u_n)_{n \ge n_0}$ in $L^p(M;\ell^\infty)$ decreases as $n_0$ increases.

The closedness of $L^p(M;c_0)$ leads to the following characterisation.

\begin{lemma}\label{2Car-c0}
Let $(u_n)_{n \ge 1}$ be an element of $L^p(M;\ell^\infty)$. Then $(u_n)_{n \ge 1}$ belongs to $L^p(M;c_0)$ if and only if
$$
\bigl\|(u_n)_{n \ge N}\bigr\|_{L^p(M;\ell^\infty)} \longrightarrow 0 \qquad \text{as } N \to \infty.
$$
\end{lemma}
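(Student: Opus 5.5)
Both implications rest on two facts: $L^p(M;c_0)$ is closed in $L^p(M;\ell^\infty)$, and truncations of a sequence are easy to control in norm. Write $U=(u_n)_{n\ge1}$ and, for $N\ge1$, let $U^{(N)}=(u_1,\dots,u_{N-1},0,0,\dots)$ be its truncation. Then $U-U^{(N)}$ is the sequence $(u_n)_{n\ge N}$ in the identification made above.

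\emph{Sufficiency.} Suppose $\|(u_n)_{n\ge N}\|_{L^p(M;\ell^\infty)}\to0$. We first check that each $U^{(N)}$ lies in $L^p(M;c_0)$; it suffices to treat a sequence with one nonzero entry $u_k$ in position $k$. Take the polar decomposition $u_k=w|u_k|$ and factor
\[
u_k=|u_k^*|^{\frac12}\,w\,|u_k|^{\frac12},
\]
using $w|u_k|^{1/2}=|u_k^*|^{1/2}w$. Here $|u_k^*|^{1/2},|u_k|^{1/2}\in L^{2p}(M)$. Put $v_k=w$ and $v_n=0$ for $n\neq k$; this sequence is eventually zero, so in particular $\|v_n\|_\infty\to0$. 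Since $L^p(M;c_0)$ is a subspace, finite sums of such sequences stay in it, so $U^{(N)}\in L^p(M;c_0)$. Now
\[
\|U-U^{(N)}\|_{L^p(M;\ell^\infty)}=\|(u_n)_{n\ge N}\|_{L^p(M;\ell^\infty)}\to0,
\]
so $U$ is a limit of elements of $L^p(M;c_0)$, and closedness gives $U\in L^p(M;c_0)$.

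\emph{Necessity.} Let $U\in L^p(M;c_0)$, so $u_n=av_nb$ with $a,b\in L^{2p}(M)$, $(v_n)$ in ${\rm Ball}(M)$ and $\|v_n\|_\infty\to0$. Directly from the definition of the norm,
\[
\|(u_n)_{n\ge N}\|_{L^p(M;\ell^\infty)}\le\|a\|_{2p}\,\sup_{n\ge N}\|v_n\|_\infty\,\|b\|_{2p}.
\]
This uses the factorisation $a\,(0,\dots,0,v_N,v_{N+1},\dots)\,b$ of the truncated sequence. The right-hand side tends to $0$ as $N\to\infty$.

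Neither direction has a real obstacle. The only care needed is in sufficiency, to confirm that finitely supported sequences belong to $L^p(M;c_0)$, and the polar-decomposition factorisation above handles this.
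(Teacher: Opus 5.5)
Your proof is correct and follows the route the paper indicates. The paper gives no detailed argument, only remarking that the characterisation follows from the closedness of $L^p(M;c_0)$; that is what you use for sufficiency, and necessity follows from the definition exactly as you show. A small simplification is available: the truncations need neither the polar decomposition nor additivity, since if $u_n=av_nb$ is any factorisation of $U$ with $\sup_n\|v_n\|_\infty\le 1$, then $U^{(N)}=a\,(v_1,\dots,v_{N-1},0,0,\dots)\,b$ is visibly in $L^p(M;c_0)$.
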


Combining Lemma \ref{2Car-c0} with Lemma \ref{2sa}, we obtain the following.

\begin{corollary}\label{Criteria-sa}
Let $(u_n)_{n \ge 1}$ be a sequence of self-adjoint elements in $L^p(M)$.  
Then $(u_n)_{n \ge 1}$ belongs to $L^p(M;c_0)$ if and only if, for every $\varepsilon > 0$, there exist $n_0 \ge 1$ and $c, d \in L^p(M)_+$ such that
$$
\max\{\|c\|_p, \|d\|_p\} < \varepsilon
\qquad \text{and} \qquad 
-d \le u_n \le c, \quad n \ge n_0.
$$
\end{corollary}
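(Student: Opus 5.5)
The plan is to prove the two implications separately, combining Lemma \ref{2Car-c0} with the two parts of Lemma \ref{2sa}. Throughout I use that a truncated sequence $(u_n)_{n\ge n_0}$ means the sequence whose entries with index $< n_0$ are replaced by $0$. I also use that $-d\le 0\le c$ holds for any $c,d\ge 0$, so these zero entries never obstruct a two-sided bound.

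For the direct implication, assume $(u_n)_{n\ge1}\in L^p(M;c_0)$ and fix $\varepsilon>0$. By Lemma \ref{2Car-c0}, there is $n_0$ with
$$
\bigl\|(u_n)_{n\ge n_0}\bigr\|_{L^p(M;\ell^\infty)}<\varepsilon .
$$
The truncated sequence still consists of self-adjoint elements, since $0$ is self-adjoint. So Lemma \ref{2sa}(i), applied with $C=\varepsilon$, gives $c\in L^p(M)_+$ with $\|c\|_p<\varepsilon$ and $-c\le u_n\le c$ for all $n\ge n_0$. Taking $d=c$ gives exactly the required condition.

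For the converse, first note that the hypothesis, applied with any fixed $\varepsilon$, gives some $n_0$ and bounds $-d\le u_n\le c$ for $n\ge n_0$. By Lemma \ref{2sa}(ii), the tail $(u_n)_{n\ge n_0}$ then lies in $L^p(M;\ell^\infty)$. The head $(u_1,\dots,u_{n_0-1},0,\dots)$ is a finite sequence of self-adjoint elements of $L^p(M)$. Each $u_n$ is self-adjoint, so it satisfies $-|u_n|\le u_n\le |u_n|$. Setting $e=\sum_{n<n_0}|u_n|\in L^p(M)_+$, we get $-e\le u_n\le e$ for $n<n_0$, and also for the zero entries. Lemma \ref{2sa}(ii) then puts the head in $L^p(M;\ell^\infty)$ as well. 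Hence $(u_n)_{n\ge1}\in L^p(M;\ell^\infty)$, which is what Lemma \ref{2Car-c0} requires.

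Now fix $\varepsilon>0$ and take $n_0,c,d$ as in the hypothesis. Lemma \ref{2sa}(ii), applied to the truncated sequence, gives
$$
\bigl\|(u_n)_{n\ge n_0}\bigr\|_{L^p(M;\ell^\infty)}\le\|c+2d\|_p\le 3\varepsilon .
$$
Since the tail norm is non-increasing in $N$, it tends to $0$ as $N\to\infty$. Lemma \ref{2Car-c0} then yields $(u_n)_{n\ge1}\in L^p(M;c_0)$.

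The only delicate point is this membership in $L^p(M;\ell^\infty)$, which Lemma \ref{2Car-c0} needs as a hypothesis. It is handled by the bound on the finitely many initial terms described above. Everything else is a direct application of the two lemmas.
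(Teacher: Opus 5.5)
Your proof is correct and takes the same route as the paper, which only says that the corollary follows by combining Lemma \ref{2Car-c0} with Lemma \ref{2sa}: part (i) with $d=c$ gives the direct implication, and part (ii) with $\|c+2d\|_p<3\varepsilon$ gives the converse. Your extra step, bounding the finitely many initial terms by $\sum_{n<n_0}|u_n|$ to get the membership in $L^p(M;\ell^\infty)$ that Lemma \ref{2Car-c0} presupposes, is left implicit in the paper, and you handle it correctly.
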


Following \cite[Definition 6.1]{JX}, a sequence $(u_n)_{n \ge 1}$ in $L^p(M)$ is said to converge bilaterally almost uniformly (b.a.u.) to $u \in L^p(M)$ if, for every $\varepsilon > 0$, one can find a projection $e \in M$ such that
$$
\tau(1-e) < \varepsilon \qquad \text{and} \qquad \| e(u_n - u)e \|_\infty \to 0 \quad \text{as } n \to \infty.
$$
Intuitively, this means that $u_n$ approaches $u$ uniformly on a “large part” 
of the algebra, making it the natural non-commutative analogue of almost 
everywhere convergence.  

The next result, from \cite[Lemma 6.2]{JX}, connects this notion to sequences in $L^p(M;c_0)$.

\begin{lemma}\label{2c0-bau}
Let $(u_n)_{n \ge 1}$ be in $L^p(M;c_0)$. Then $u_n \to 0$ b.a.u.
\end{lemma}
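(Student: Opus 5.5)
The plan is to argue directly from the definition of $L^p(M;c_0)$. Let $u_n = a v_n b$ with $a,b\in L^{2p}(M)$, $v_n\in{\rm Ball}(M)$ and $\|v_n\|_\infty\to 0$. Fix $\varepsilon>0$. The aim is a projection $e$ with $\tau(1-e)<\varepsilon$ such that $ea$ and $be$ are bounded operators. Once this is done,
$$
\|e u_n e\|_\infty \le \|ea\|_\infty\,\|v_n\|_\infty\,\|be\|_\infty \longrightarrow 0 ,
$$
which is exactly b.a.u. convergence of $u_n$ to $0$.

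To build $e$, I would work with the positive elements $aa^*$ and $b^*b$, which both lie in $L^p(M)$. I take spectral projections
$$
e_1 = \chi_{[0,\lambda]}(aa^*), \qquad e_2 = \chi_{[0,\lambda]}(b^*b)
$$
for a large $\lambda>0$. Chebyshev's inequality in $L^p(M)$ gives
$$
\tau(1-e_1) \le \lambda^{-p}\,\tau\bigl((aa^*)^p\bigr) = \lambda^{-p}\|a\|_{2p}^{2p},
$$
and the same bound holds for $e_2$ with $b$ in place of $a$. Choosing $\lambda$ large makes both quantities smaller than $\varepsilon/2$. Then $\|e_1 a\|_\infty^2 = \|e_1 aa^* e_1\|_\infty \le \lambda$, and likewise $\|b e_2\|_\infty^2 = \|e_2 b^*b e_2\|_\infty \le \lambda$.

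Finally I set $e = e_1\wedge e_2$. Since $e\le e_1$, we have $e = e e_1$, so $\|ea\|_\infty = \|e e_1 a\|_\infty \le \|e_1 a\|_\infty$, and similarly for $be$. Subadditivity of $\tau$ on joins gives $\tau(1-e)\le\tau(1-e_1)+\tau(1-e_2)<\varepsilon$.

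The only point needing care is the estimate $\|e_1 a\|_\infty \le \lambda^{1/2}$ for an unbounded $\tau$-measurable $a$. It follows from $e_1 aa^* e_1 \le \lambda e_1$, which holds by functional calculus, together with the fact that $e_1 a$ is then a bounded, everywhere-defined operator.
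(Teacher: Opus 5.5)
Your proof is correct and is essentially the standard argument of \cite[Lemma 6.2]{JX}, which the paper cites without reproducing. That argument factors $u_n = a v_n b$, cuts $a$ and $b$ down by spectral projections of $aa^*$ and $b^*b$ (using Chebyshev to bound $\tau$ of the complements), and takes $e=e_1\wedge e_2$, so that $\|eu_ne\|_\infty\le\|ea\|_\infty\|v_n\|_\infty\|be\|_\infty\to 0$. Your justification of $\|e_1a\|_\infty\le\lambda^{1/2}$ is sound; it reads most cleanly as $e_1a=\bigl(\chi_{[0,\lambda]}(aa^*)\,|a^*|\bigr)u$, where $a=|a^*|u$ is the polar decomposition.
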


\section{Random ergodic averages}\label{3}

In this section, we introduce a Bernoulli sequence \((X_n)\) and associated hitting times \((n_k)\), which yield a sparse random subsequence of natural numbers. We then show that averaging along \((n_k)\) is equivalent to considering weighted sums
\[
\frac{1}{W_N}\sum_{n \leq N} X_n T^n(x).
\]

Let $(\Omega, \mathbb{P})$ be a probability space and fix $\alpha \in (0,1)$.  
Throughout the paper, $(X_n)_{n \ge 1}$ denotes a sequence of independent Bernoulli random variables 
$X_n \colon \Omega \to \{0,1\}$ with success probabilities
\[
\mathbb{P}(X_n = 1) = n^{-\alpha}, \qquad n \ge 1.
\]
In particular, 
\[
\mathbb{E}[X_n] = n^{-\alpha}.
\] 
We set
\[
Y_n := X_n - \mathbb{E}[X_n],
\]
and throughout the paper, $Y_n$ denotes this centered version of $X_n$.

Furthermore, let
\begin{equation}\label{3WN}
W_N := \sum_{n=1}^N n^{-\alpha} \sim \frac{1}{1-\alpha} \, N^{1-\alpha}.
\end{equation}

It follows from the strong law of large numbers that for almost every $\omega\in\Omega$, \begin{equation}\label{3SLLN} X_1+\cdots+X_N\sim W_N, \end{equation} see e.g. \cite[Appendix A]{FLW}.

Consider the hitting times random variables $n_k\colon\Omega\to\Ndb$, for $k\geq 1$, 
defined almost everywhere by 
$$ n_k=\min\bigl\{N\geq 1\, :\, X_1+\cdots +X_N =k\bigr\}. $$ It readily follows 
from (\ref{3WN}), (\ref{3SLLN}) and the identity $X_1+\cdots +X_{n_k}=k$ that for 
almost every $\omega\in\Omega$, 
$$
n_k(\omega)\sim \bigl((1-\alpha)k\bigr)^{\frac{1}{1-\alpha}}. 
$$

The following lemma is straightforward.

\begin{lemma}\label{3Equiv}
Let $X$ be a Banach space, let $(x_n)_{n \ge 1}$ be a sequence in $X$, and let $x \in X$.  
Fix $\omega \in \Omega$ such that
$$
\sum_{n=1}^\infty X_n(\omega) = \infty.
$$
Then the following statements are equivalent:
\begin{itemize}
\item[(i)] 
$$
\frac{1}{m} \sum_{k=1}^m x_{n_k(\omega)} \longrightarrow x \quad \text{as } m \to \infty.
$$
\item[(ii)] 
$$
\frac{1}{\sum_{n=1}^N X_n(\omega)} \sum_{n=1}^N X_n(\omega) x_n \longrightarrow x \quad \text{as } N \to \infty.
$$
\end{itemize}
\end{lemma}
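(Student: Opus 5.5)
The plan is to view the averages in (ii) as a reparametrisation of those in (i), and then to use that (ii) is stable under passing to the hitting times. Write $S_N=\sum_{n=1}^N X_n(\omega)$ and $B_m=\frac1m\sum_{k=1}^m x_{n_k(\omega)}$. Since $\sum_n X_n(\omega)=\infty$, every $n_k=n_k(\omega)$ is finite. The sequence $(n_k)_{k\ge1}$ is strictly increasing and consists exactly of the indices $n$ with $X_n(\omega)=1$. Hence, for every $N\ge n_1$, if $m=S_N\ge1$ then $n_m\le N<n_{m+1}$. Because $X_n(\omega)=0$ for $n_m<n\le N$, this gives
$$
\frac{1}{S_N}\sum_{n=1}^N X_n(\omega)x_n=\frac{1}{m}\sum_{k=1}^m x_{n_k}=B_{S_N}.
$$
So the averages in (ii) are exactly the sequence $(B_{S_N})_{N\ge n_1}$. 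Here $N\mapsto S_N$ is nondecreasing, tends to $\infty$, and increases by steps of $0$ or $1$, so it takes every integer value $m\ge1$; in particular $S_{n_m}=m$.

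For (i)$\Rightarrow$(ii), I will use only that $S_N\to\infty$. Given $\varepsilon>0$, choose $m_0$ with $\|B_m-x\|<\varepsilon$ for all $m\ge m_0$. Then for every $N\ge n_{m_0}$ we have $S_N\ge m_0$, and therefore $\|B_{S_N}-x\|<\varepsilon$.

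For (ii)$\Rightarrow$(i), I will pass to the subsequence $N=n_m$. Since $n_m\to\infty$ and $S_{n_m}=m$, the displayed identity gives $B_m=B_{S_{n_m}}\to x$.

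There is no real obstacle. The only point needing care is the bookkeeping in the identity above: the terms with $X_n(\omega)=0$ contribute nothing, and $S_N$ passes through every integer, so no values of $B_m$ are skipped.
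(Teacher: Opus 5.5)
Your proof is correct. The paper gives no proof and calls the lemma straightforward; your argument supplies it: identify the averages in (ii) with $B_{S_N}$ for $N\ge n_1$, use $S_N\to\infty$ for (i)$\Rightarrow$(ii), and use the subsequence $N=n_m$ with $S_{n_m}=m$ for (ii)$\Rightarrow$(i).
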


\begin{remark}
Combining the above lemma with \eqref{3SLLN}, we obtain that for any $(x_n)_{n \ge 1}$ in $X$ and any $x \in X$,
$$
\frac{1}{m} \sum_{k=1}^m x_{n_k} \longrightarrow x \quad \text{almost surely}
$$
if and only if
$$
\frac{1}{W_N} \sum_{n=1}^N X_n x_n \longrightarrow x \quad \text{almost surely}.
$$
\end{remark}

Let $X$ be a Banach space and let $I_X$ denote its identity operator.  
For any linear contraction $T \colon X \to X$, set
$$
A_N(T) = \frac{1}{N} \sum_{n=1}^N T^n, \qquad N \ge 1.
$$
If \(X\) is reflexive, then
\begin{equation}\label{3Decomp}
X = \operatorname{Ker}(I_X - T) \oplus \overline{\operatorname{Ran}}(I_X - T),
\end{equation}
by \cite[Theorem 8.22]{EFHN}.
Let \(Q_T \colon X \to X\) denote the projection onto \(\operatorname{Ker}(I_X - T)\) along \(\overline{\operatorname{Ran}}(I_X - T)\). In view of the decomposition \eqref{3Decomp}, it follows that
\begin{equation}\label{3MET}
\lim_{N \to \infty} A_N(T)(x) = Q_T(x), \qquad x \in X.
\end{equation}
The same conclusion holds more generally for any power-bounded operator (i.e., \(\displaystyle{\sup_{n \ge 0} \|T^n\| < \infty}\)).

We consider the random averages given by
\begin{equation}\label{3Sums}
\frac{1}{W_N} \sum_{n=1}^N X_n T^n(x), \qquad N \ge 1,
\end{equation}
for any \(x \in X\). The study of the convergence of such averages \eqref{3Sums}, as well as of the closely related associated averages
\[
\frac{1}{m} \sum_{k=1}^m T^{n_k}(x),
\]
appears in Bourgain's fundamental paper~\cite{B} (see Appendix~1) in the classical $L^p$-setting, as discussed in the introduction; see also \cite[Chapter~5]{K} and the references therein. In this article, we work in the setting $X=L^p(M)$ of non-commutative $L^p$-spaces.

\section{Almost sure norm convergence}\label{4}

In this section, we study almost sure norm convergence of the random ergodic averages associated with the Bernoulli sampling sequence introduced in Section 3. We show that, almost surely in $\omega$, the sequence
\(\frac{1}{W_N}\sum_{n=1}^N X_n(\omega)\, T^n(x)\)
converges in norm to the ergodic projection $Q_T(x)$. The proof splits $X_n$ into its expectation and fluctuation parts, and controls each separately: the fluctuation term vanishes almost surely (Proposition~\ref{4Automatic}), while the deterministic term reduces to a weighted ergodic average (Lemma~\ref{4Cesaro}).
\medskip

Let $X$ be a Banach space.  
For $1 \le p < \infty$, let $L^p(\Omega; X)$ denote the Bochner space of all measurable functions $\phi \colon \Omega \to X$ (defined up to almost everywhere equality) such that the norm function
$t \mapsto \|\phi(t)\|_X$ belongs to $L^p(\Omega)$.  
Equipped with the norm
$$
\|\phi\|_p = \Biggl( \int_{\Omega} \|\phi(t)\|_X^p \, d\Pdb(t) \Biggr)^{\frac{1}{p}},
$$
$L^p(\Omega; X)$ is a Banach space. We refer to \cite[Chapters I--IV]{DU} for the necessary background. 

We will use the classical notion of type of a Banach space; see, for instance, \cite{Maurey}. 
We say that $X$ has non-trivial type if it has type $p$ for some $p \in (1,2]$. 
It is known that non-commutative $L^p$-spaces have type $p$ when $p \in (1,2]$, and type $2$ 
when $p \in [2,\infty)$. We refer to \cite{Fack} for this result; see also 
\cite[Corollary 5.5]{PX}. 
Consequently, Corollary \ref{4NormCV} below applies to non-commutative 
$L^p$-spaces for all $1 < p < \infty$.

The proof of the following result follows arguments developed in \cite[Section 2]{HJP}.
We use $X_n$ and $Y_n$ defined in Section \ref{3}.

\begin{proposition}\label{4Automatic}
Let $X$ be a Banach space with non-trivial type, and let $(x_n)_{n\geq 1}$ be a bounded sequence in $X$. 
Then, for almost every $\omega \in \Omega$,
$$
\frac{1}{W_N} \sum_{n=1}^N Y_n(\omega)\, x_n \longrightarrow 0 \quad \text{as } N \to \infty.
$$
\end{proposition}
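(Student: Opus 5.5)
The plan is to combine a moment estimate in $L^q(\Omega;X)$, coming from the type of $X$, with a lacunary-blocking argument and Borel--Cantelli. Fix $q\in(1,2]$ such that $X$ has type $q$, and let $K=\sup_n\norm{x_n}$. Write $S_N=\sum_{n=1}^N Y_n x_n$.

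\textbf{Step 1 (moment bound).} The $Y_n$ are independent, mean zero and bounded by $1$. By standard symmetrization with Rademacher variables $(\varepsilon_n)$ and the type $q$ inequality, for any $1\le M<N$ we get
$$
\mathbb{E}\Bignorm{\sum_{n=M+1}^N Y_n x_n}^q \le 2^q\, \mathbb{E}\Bignorm{\sum_{n=M+1}^N \varepsilon_n Y_n x_n}^q \le C\sum_{n=M+1}^N \mathbb{E}|Y_n|^q\,\norm{x_n}^q .
$$
Since $|Y_n|\le 1$ and $q\ge 1$, we have $\mathbb{E}|Y_n|^q\le \mathbb{E}|Y_n|\le 2n^{-\alpha}$. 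Hence
$$
\mathbb{E}\norm{S_N-S_M}^q\le C' K^q (W_N-W_M).
$$
In particular $\mathbb{E}\norm{S_N}^q\lesssim W_N$, so $\mathbb{E}\norm{S_N/W_N}^q\lesssim W_N^{1-q}\to 0$. This is only polynomial decay, so it does not yet give almost sure convergence.

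\textbf{Step 2 (lacunary subsequence).} Choose $N_j$ such that $W_{N_j}\approx \theta^j$ for some $\theta>1$; this is possible since the increments $n^{-\alpha}\le 1$ are small. Then $\sum_j \mathbb{E}\norm{S_{N_j}/W_{N_j}}^q\lesssim\sum_j\theta^{-j(q-1)}<\infty$. By Chebyshev's inequality and Borel--Cantelli, $S_{N_j}/W_{N_j}\to0$ almost surely.

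\textbf{Step 3 (filling the gaps).} This is the main obstacle. For $N_j<N\le N_{j+1}$ we need to control $\max_{N_j<N\le N_{j+1}}\norm{S_N-S_{N_j}}$. The crude bound $\sum_{n=N_j+1}^{N_{j+1}}|Y_n|K$ is too weak: it is of order $(\theta-1)W_{N_j}$ in expectation, which only yields a $\limsup$ of order $\theta-1$. Letting $\theta\downarrow1$ along a countable sequence would finish the argument, provided this crude bound also holds almost surely.

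Crudely, $\sum_{n\le N}|Y_n|\le \sum_{n\le N}X_n+W_N$, and by the strong law of large numbers \eqref{3SLLN} the right-hand side is asymptotically $2W_N$. So, almost surely,
$$
\limsup_j \frac{1}{W_{N_j}}\sum_{N_j<n\le N_{j+1}}|Y_n|\le 2(\theta-1).
$$
This follows from \eqref{3SLLN} applied to the differences, since $\sum_{N_j<n\le N_{j+1}} X_n \sim W_{N_{j+1}}-W_{N_j}$. Consequently $\limsup_N \norm{S_N}/W_N\le 2K(\theta-1)$ almost surely. Intersecting the full-measure events over $\theta=1+1/m$ gives the claim.

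As an alternative to the crude bound, one could use a Lévy--Ottaviani maximal inequality on each block combined with Step 1, but the SLLN approach above avoids needing it.
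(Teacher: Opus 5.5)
Your argument is correct, but it takes a different route from the paper's.

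\textbf{The paper's route.} The paper applies the type-$p$ inequality for independent mean-zero sums \cite{HJP} directly to the normalized terms $W_n^{-1}Y_n\otimes x_n$. Since $\norm{Y_n}_p^p\sim n^{-\alpha}$ and $p(1-\alpha)+\alpha>1$, the series $\sum_n W_n^{-1}Y_n x_n$ is Cauchy in $L^p(\Omega;X)$. A L\'evy-type result for independent summands then upgrades this to almost sure convergence of the series, and Kronecker's lemma gives $W_N^{-1}\sum_{n\le N}Y_n x_n\to 0$.

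\textbf{Your route.} You use the same symmetrization/type estimate only in the unweighted form $\mathbb{E}\norm{S_N-S_M}^q\lesssim W_N-W_M$. Chebyshev and Borel--Cantelli then give almost sure convergence along a subsequence that is geometric in $W$. You fill the gaps with the crude bound $\vert Y_n\vert\le X_n+n^{-\alpha}$ together with the scalar law of large numbers \eqref{3SLLN}. This gives $\limsup_N\norm{S_N}/W_N\le 2K(\theta-1)$, and the limsup is $0$ after letting $\theta\downarrow 1$ along a countable set. I checked the steps. In particular, the blockwise asymptotics $\sum_{N_j<n\le N_{j+1}}X_n\sim W_{N_{j+1}}-W_{N_j}$ do follow from \eqref{3SLLN}, because $W_{N_{j+1}}/W_{N_j}\to\theta>1$.

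\textbf{Trade-off.}
\begin{itemize}
\item The paper's proof is shorter and needs only $p$-th moment control of the $Y_n$. It also yields the stronger intermediate fact that the random series $\sum_n W_n^{-1}Y_n(\omega)x_n$ converges almost surely.
\item Your proof avoids the Banach-space L\'evy/It\^o--Nisio machinery and Kronecker's lemma, using only Chebyshev and Borel--Cantelli. In exchange, it relies on the specific Bernoulli structure ($\vert Y_n\vert\le 1$, $X_n\ge 0$) and takes the scalar strong law as an input. That is legitimate here, since the paper states \eqref{3SLLN} beforehand.
\item Your lacunary-subsequence-plus-gap-filling scheme is essentially the device the paper uses later, in the proof of Theorem \ref{5Main}, to pass from \eqref{5Lacun} to \eqref{5Goal}.
\end{itemize}
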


\begin{proof}
Assume that $X$ has type $p>1$. By definition of $X_n$ and $Y_n$, we have $Y_n = -\, n^{-\alpha}$ with probability $1-n^{-\alpha}$, and $Y_n = 1-n^{-\alpha}$ with probability $n^{-\alpha}$. Hence,
$$
\|Y_n\|_p^p = (1-n^{-\alpha}) n^{-p\alpha} + n^{-\alpha} (1-n^{-\alpha})^p,
$$
and 
$\|Y_n\|_p^p \sim n^{-\alpha}.$
Applying (\ref{3WN}), we obtain
$$
W_n^{-p} \|Y_n\|_p^p \sim (1-\alpha)^p n^{-p(1-\alpha)-\alpha}.
$$
Since $p>1$, we have $p(1-\alpha) + \alpha > 1$, hence
$$
\sum_{n=1}^\infty W_n^{-p} \|Y_n\|_p^p < \infty.
$$
The sequence $(x_n)_{n\geq 1}$ is bounded and $\|Y_n \otimes x_n\|_p=
\norm{Y_n}_p\norm{x_n}$, so that
$$
\sum_{n=1}^\infty W_n^{-p} \|Y_n \otimes x_n\|_p^p < \infty.
$$
By \cite[(2.1.2)]{HJP} (valid for Banach spaces of type $p$) and Cauchy's criterion, the series
$$
\sum_n W_n^{-1} Y_n \otimes x_n
$$
converges in the Bochner space $L^p(\Omega; X)$. Markov's inequality and \cite[Lemma 1.2]{HJP} then imply that for almost every $\omega \in \Omega$, the series
$$
\sum_n W_n^{-1} Y_n(\omega) x_n
$$
converges in $X$. The result follows from Kronecker's lemma.
\end{proof}

The following result is well-known; we include a proof for completeness, as the proof of Lemma \ref{5Cesaro} follows by a modification of this argument.

\begin{lemma}\label{4Cesaro}
Let $X$ be a Banach space, let $z\in X$, and let $(z_n)_{n\geq 1}$ be a sequence in $X$ such that
\[
\frac{1}{N}\sum_{n=1}^N z_n \longrightarrow z \quad \text{as } N\to\infty.
\]
Then
\[
\frac{1}{W_N}\sum_{n=1}^N n^{-\alpha} z_n \longrightarrow z \quad \text{as } N\to\infty.
\]
\end{lemma}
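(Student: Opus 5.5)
We will use Abel summation. Set $S_n=\sum_{k=1}^n z_k$ (with $S_0=0$) and write $S_n = nz + n\varepsilon_n$, where $\varepsilon_n\to 0$ in $X$ by hypothesis. Since $\sum_{n=1}^N n^{-\alpha}\cdot z = W_N z$, we may replace $z_n$ by $z_n - z$ and assume $z=0$. Then $S_n = n\varepsilon_n$ with $\norm{\varepsilon_n}\to 0$, and it suffices to show that $W_N^{-1}\sum_{n=1}^N n^{-\alpha}z_n \to 0$.

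Summation by parts gives
\[
\sum_{n=1}^N n^{-\alpha} z_n = \sum_{n=1}^N n^{-\alpha}(S_n - S_{n-1}) = N^{-\alpha}S_N + \sum_{n=1}^{N-1} \bigl(n^{-\alpha}-(n+1)^{-\alpha}\bigr) S_n .
\]
We estimate the two terms separately.

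For the boundary term, $\norm{N^{-\alpha}S_N} = N^{1-\alpha}\norm{\varepsilon_N}$. By \eqref{3WN}, $W_N\sim N^{1-\alpha}/(1-\alpha)$, so this term divided by $W_N$ tends to $0$.

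For the sum, the coefficients $n^{-\alpha}-(n+1)^{-\alpha}$ are positive and bounded by $\alpha n^{-\alpha-1}$ (mean value theorem). Hence each summand has norm at most $\alpha n^{-\alpha}\norm{\varepsilon_n}$. Given $\eta>0$, choose $n_0$ with $\norm{\varepsilon_n}<\eta$ for $n\ge n_0$. Then
\[
\Bignorm{\sum_{n=1}^{N-1}\bigl(n^{-\alpha}-(n+1)^{-\alpha}\bigr)S_n} \le C_{n_0} + \alpha\eta\sum_{n=1}^{N} n^{-\alpha} = C_{n_0} + \alpha\eta W_N ,
\]
where $C_{n_0}$ does not depend on $N$. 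Dividing by $W_N\to\infty$ gives $\limsup \le \alpha\eta$, and since $\eta$ is arbitrary the limit is $0$. Combining the two estimates gives the result.

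There is no real obstacle here. The only point requiring care is the bound on the difference of consecutive weights, which lets the tail sum be compared with $W_N$ itself. The fact that $W_N\to\infty$, which holds because $\alpha<1$, is what makes the finitely many initial terms negligible.
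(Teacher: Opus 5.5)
Your proof is correct and follows the paper's argument: the same Abel summation, with the boundary term controlled via $W_N\sim N^{1-\alpha}/(1-\alpha)$ and the remaining sum handled by a Ces\`aro-type argument. The only difference is minor. Because you first reduce to $z=0$, you need only the upper bound $n^{-\alpha}-(n+1)^{-\alpha}\le \alpha n^{-\alpha-1}$ and an explicit $\eta$-splitting. The paper instead keeps $z$, quotes the weighted Ces\`aro lemma with weights $\lambda_m=m(m^{-\alpha}-(m+1)^{-\alpha})$, and uses $\sum_{m<N}\lambda_m\sim\alpha W_N$ to identify the two limits $(1-\alpha)z$ and $\alpha z$.
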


\begin{proof}
Set $S_m := \sum_{n=1}^m z_n$. By Abel summation, for $N \ge 2$,
\[
\sum_{n=1}^N n^{-\alpha} z_n
= N^{-\alpha} S_N + \sum_{m=1}^{N-1} \bigl(m^{-\alpha} - (m+1)^{-\alpha}\bigr) S_m.
\]
Let
\[
\lambda_m := m \bigl(m^{-\alpha} - (m+1)^{-\alpha}\bigr).
\]
Then
\begin{equation}\label{4Expan}
\frac{1}{W_N} \sum_{n=1}^N n^{-\alpha} z_n
=
\frac{N^{-\alpha}}{W_N} S_N
+
\left(\frac{\sum_{m=1}^{N-1} \lambda_m}{W_N}\right)
\left(\sum_{m=1}^{N-1} \lambda_m\right)^{-1}
\sum_{m=1}^{N-1} \lambda_m \frac{S_m}{m}.
\end{equation}
Since $\lambda_m \sim \alpha m^{-\alpha}$, we have
\begin{equation}\label{4Alpha}
\sum_{m=1}^{N-1} \lambda_m \sim \alpha W_N.
\end{equation}
As $S_m/m \to z$, Cesàro's theorem yields
\[
\left(\sum_{m=1}^{N-1} \lambda_m\right)^{-1}
\sum_{m=1}^{N-1} \lambda_m \frac{S_m}{m} \longrightarrow z,
\]
so the second term in the right-hand side 
of \eqref{4Expan} converges to $\alpha z$. 
By \eqref{3WN}, the first term converges to $(1-\alpha)z$. 
Hence the sum converges to $z$.
\end{proof}

In the following statement, we use the projection $Q_T$ as defined in (\ref{3Decomp}) and (\ref{3MET}).

\begin{corollary}\label{4NormCV}
Let $X$ be a reflexive Banach space with non-trivial type, let
$T\colon X \to X$ be a contraction, and let $x \in X$. Then for almost every $\omega \in \Omega$,
\[
\frac{1}{W_N}\sum_{n=1}^N X_n(\omega)\, T^n(x) \longrightarrow Q_T(x)
\quad \text{as } N\to\infty,
\]
and
\[
\frac{1}{m}\sum_{k=1}^m T^{n_k(\omega)}(x) \longrightarrow Q_T(x)
\quad \text{as } m\to\infty.
\]
\end{corollary}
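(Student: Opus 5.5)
We split $X_n = n^{-\alpha} + Y_n$ and write, for every $\omega$ and $N\ge 1$,
\[
\frac{1}{W_N}\sum_{n=1}^N X_n(\omega)\, T^n(x)
=
\frac{1}{W_N}\sum_{n=1}^N n^{-\alpha}\, T^n(x)
+
\frac{1}{W_N}\sum_{n=1}^N Y_n(\omega)\, T^n(x).
\]
Both terms are then handled by earlier results.

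For the fluctuation term, $T$ is a contraction, so $x_n := T^n(x)$ satisfies $\|x_n\|\le \|x\|$ and is a bounded sequence in $X$. Since $X$ has non-trivial type, Proposition \ref{4Automatic} applies directly: for almost every $\omega$, the second term tends to $0$.

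For the deterministic term, set $z_n := T^n(x)$ and $z := Q_T(x)$. Because $X$ is reflexive, the mean ergodic theorem (\ref{3MET}) gives $\frac1N\sum_{n=1}^N z_n = A_N(T)(x)\to Q_T(x)$. Lemma \ref{4Cesaro} then yields $\frac{1}{W_N}\sum_{n=1}^N n^{-\alpha}T^n(x)\to Q_T(x)$. Combining the two terms gives the first assertion on a set of full probability; this set depends on $x$, which is allowed.

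For the second assertion, intersect this full-measure set with the full-measure set on which (\ref{3SLLN}) holds, namely $\sum_{n\le N}X_n(\omega)\sim W_N$. In particular $\sum_n X_n(\omega)=\infty$, since $W_N\to\infty$. On this intersection, the ratio $W_N/\sum_{n\le N}X_n(\omega)$ tends to $1$, so multiplying the convergent sequence from the first part by it gives
\[
\Bigl(\sum_{n=1}^N X_n(\omega)\Bigr)^{-1}\sum_{n=1}^N X_n(\omega)T^n(x)\longrightarrow Q_T(x).
\]
Lemma \ref{3Equiv}, applied with $x_n=T^n(x)$, converts this into convergence of $\frac1m\sum_{k=1}^m T^{n_k(\omega)}(x)$ to $Q_T(x)$.

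There is no real obstacle, since each step is an earlier result applied as stated. The only points to check are that the almost-sure sets are countably many (here just two), so their intersection still has full measure, and that the boundedness hypothesis of Proposition \ref{4Automatic} is met, which holds because $T$ is a contraction.
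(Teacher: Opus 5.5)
Your proposal is correct and follows the paper's proof: you split $X_n = n^{-\alpha}+Y_n$, handle the deterministic part with (\ref{3MET}) and Lemma \ref{4Cesaro}, kill the fluctuation part with Proposition \ref{4Automatic}, and then pass to the hitting-time averages via (\ref{3SLLN}) and Lemma \ref{3Equiv}. The paper's argument is the same, only more tersely written; your renormalisation by $W_N/\sum_{n\le N}X_n(\omega)\to 1$ is exactly the content of the remark following Lemma \ref{3Equiv}.
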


\begin{proof}
Let $x \in X$. By \eqref{3MET} and Lemma \ref{4Cesaro},
\begin{equation}\label{4CV}
\frac{1}{W_N}\sum_{n=1}^N n^{-\alpha} T^n(x) \longrightarrow Q_T(x).
\end{equation}
Combining \eqref{4CV} with Proposition \ref{4Automatic} yields the first convergence.  The second follows from Lemma \ref{3Equiv}.
\end{proof}

\section{Pointwise convergence}\label{5}

Throughout this section, we let $(M,\tau)$ be a tracial von Neumann algebra.

\subsection{Positive Dunford-Schwartz operators.} We say that an operator $T\colon M\to M$ is a  Dunford-Schwartz operator if $$
\norm{T(x)}_\infty\leq \norm{x}_\infty
\qquad \hbox{and}\qquad
\norm{T(x)}_1\leq \norm{x}_1,
$$
for all $x\in M$. In this case, by interpolation,
the restriction of $T$ to $M\cap L^p(M)$ extends uniquely to a contraction
$$
T_p\colon L^p(M)\longrightarrow L^p(M),
$$
for all $1\leq p\leq \infty$. If, in addition, $T$ is positive,
then $T_p$ is positive for all $1\leq p\leq \infty$.

Let $T$ be as above and fix $1<p<\infty$.
Since $L^p(M)$ is reflexive, we may consider the projection
$Q_{T_p}\colon L^p(M)\to L^p(M)$
provided by (\ref{3Decomp}) and (\ref{3MET}).
Then, according to \cite[Theorem 6.3]{JX}, we have
\begin{equation}\label{5c0}
\Bigl(\frac{1}{N}\sum_{n=1}^N T_p^n(x)
- Q_{T_p}(x)\Bigr)_{N\geq1}\,\in L^p(M;c_0),
\end{equation}
for all $x\in L^p(M)$. Adapting the argument in the proof of 
Lemma \ref{4Cesaro}, we obtain the following.

\begin{lemma}\label{5Cesaro}
For every $x\in L^p(M)$, we have
$$
\Bigl(\frac{1}{W_N}\sum_{n=1}^N n^{-\alpha} T_p^n(x)
- Q_{T_p}(x)\Bigr)_{N\geq1}\in L^p(M;c_0).
$$
\end{lemma}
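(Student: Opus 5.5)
We follow the Abel summation argument of Lemma \ref{4Cesaro}, with norm convergence replaced by membership in $L^p(M;c_0)$. Put $z_n := T_p^n(x) - Q_{T_p}(x)$ and $S_m := \sum_{n=1}^m z_n$. By (\ref{5c0}), the sequence $u_m := S_m/m$ belongs to $L^p(M;c_0)$. Since $\frac{1}{W_N}\sum_{n\le N} n^{-\alpha} Q_{T_p}(x) = Q_{T_p}(x)$, the sequence to be studied is
$$
v_N := \frac{1}{W_N}\sum_{n=1}^N n^{-\alpha} z_n .
$$
Abel summation, exactly as in (\ref{4Expan}), gives
$$
v_N = \frac{N^{1-\alpha}}{W_N}\, u_N + \frac{1}{W_N}\sum_{m=1}^{N-1} \lambda_m u_m ,
$$
where $\lambda_m = m(m^{-\alpha}-(m+1)^{-\alpha}) > 0$. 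Because $L^p(M;c_0)$ is a linear subspace, it suffices to treat the two terms separately.

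The first term has the form $(\beta_N u_N)_N$ with $\beta_N = N^{1-\alpha}/W_N$ bounded by (\ref{3WN}). Multiplying a $c_0$-sequence by a bounded scalar sequence keeps it in $L^p(M;c_0)$: if $u_N = a w_N b$ with $\|w_N\|_\infty \to 0$, then $\beta_N u_N = a(\beta_N w_N)b$ and $\|\beta_N w_N\|_\infty \to 0$.

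For the second term, write $u_m = a w_m b$ with $a,b\in L^{2p}(M)$ and $\|w_m\|_\infty\to 0$. Then
$$
\frac{1}{W_N}\sum_{m=1}^{N-1}\lambda_m u_m = a\, \theta_N\, b, \qquad \theta_N := \frac{1}{W_N}\sum_{m=1}^{N-1}\lambda_m w_m \in M .
$$
By (\ref{4Alpha}), the ratio $\sum_{m<N}\lambda_m/W_N$ is bounded. Since $\sum_m\lambda_m = \infty$, Cesàro's theorem applied in the Banach space $M$ gives $\|\theta_N\|_\infty \to 0$. The $\theta_N$ are bounded as well, so this is a valid $L^{2p}\cdot M\cdot L^{2p}$ factorisation with middle terms tending to $0$, and hence the second term lies in $L^p(M;c_0)$.

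The key point, and the only real obstacle, is that the factorisation $u_m = a w_m b$ uses \emph{common} outer factors $a,b$ for all $m$. This is what lets linear averaging commute with the factorisation, so that Cesàro's theorem is applied to the middle terms in the operator norm of $M$ rather than to $L^p$ norms. No use of Lemma \ref{2Car-c0} or of the self-adjoint criterion is needed. The case $N=1$ is a harmless finite modification, since changing finitely many terms does not affect membership in $L^p(M;c_0)$.
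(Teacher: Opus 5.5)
Your proposal is correct and is essentially the paper's proof. It uses the same Abel summation identity as in Lemma \ref{4Cesaro}, handles the first term via boundedness of $N^{1-\alpha}/W_N$, and treats the second term through the common factorisation $S_m/m = a w_m b$, so that Cesàro's lemma is applied to the middle terms in $M$. Absorbing the ratio $\sum_{m<N}\lambda_m / W_N$ directly into $\theta_N$, rather than first normalising by $\sum_{m<N}\lambda_m$, is only a cosmetic difference, and it incidentally avoids the empty normalisation at $N=1$.
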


\begin{proof} 
We write $T=T_p$ and $Q=Q_{T_p}$ for simplicity.
For $n\geq 1$, set $z_n = T^n(x) - Q(x)$, and for $m\geq 1$, set
$S_m = \sum_{n=1}^m z_n$. Then, by (\ref{5c0}),
\begin{equation}\label{5c0bis}
\Bigl(\frac{S_N}{N}\Bigr)_{N\geq 1}\,\in L^p(M;c_0).
\end{equation}
Furthermore, the computation in the proof of
Lemma \ref{4Cesaro} yields (\ref{4Expan}) for all 
$N\geq 2$.

We treat the two terms on the right-hand side of (\ref{4Expan}) separately.
First, using (\ref{3WN}) and (\ref{5c0bis}), we obtain
$$
\Bigl(\frac{N^{-\alpha}}{W_N}\, S_N \Bigr)_{N\geq 1}\,\in L^p(M;c_0).
$$
Second, we claim that
$$
\biggl(\Bigl(\sum_{m=1}^{N-1}\lambda_m\Bigr)^{-1}\cdotp \sum_{m=1}^{N-1}
\lambda_m\Bigl(\frac{S_m}{m}\Bigr)\biggr)_{N\geq 1}\, \in L^p(M;c_0).
$$
To verify this, write
$$
\frac{S_m}{m} = ay_mb,\qquad m\geq 1,
$$
with $a,b\in L^{2p}(M)$ and $y_m\in M$ such that $y_m\to 0$.
Then, for any $N\geq 1$, we have the $L^{2p}(M)\cdotp M\cdotp L^{2p}(M)$ decomposition
$$
\biggl(\sum_{m=1}^{N-1}\lambda_m\biggr)^{-1}\cdotp \sum_{m=1}^{N-1}
\lambda_m\Bigl(\frac{S_m}{m}\Bigr)
= a\biggl(\biggl(\sum_{m=1}^{N-1}\lambda_m\biggr)^{-1}
\cdotp \sum_{m=1}^{N-1} \lambda_m y_m\biggr)b,
\qquad N\geq 1.
$$
Since $\sum_{m=1}^{N-1}\lambda_m\to\infty$, Cesàro's lemma ensures that
$$
\biggl(\sum_{m=1}^{N-1}\lambda_m\biggr)^{-1}\cdotp \sum_{m=1}^{N-1}
\lambda_m y_m
\,\longrightarrow\, 0,
$$
as $N\to \infty$. This proves the claim. Then, using (\ref{4Alpha}), we deduce that the sequence given by the
second term on the right-hand side of (\ref{4Expan}) belongs to $L^p(M;c_0)$. Therefore,
$$
\biggl(\frac{1}{W_N}\sum_{n=1}^N n^{-\alpha} z_n\biggr)_{N\geq 1}
\,\in L^p(M;c_0),
$$
which yields the result.
\end{proof}

\subsection{Almost sure convergence of random ergodic averages}

If $T\colon M\to M$ is any
positive Dunford-Schwartz operator, then the Junge-Xu individual ergodic theorem \cite[Corollary 6.4]{JX} asserts that for all $1<p<\infty$ and all $x\in L^p(M)$, 
$$
\frac{1}{N}\,\sum_{n=1}^N T_p^n(x)\,\longrightarrow Q_{T_p}(x)\quad\hbox{b.a.u.}
$$

The main objective of this section is to establish a probabilistic version of this result.

\begin{theorem}\label{5Main}
Let $T\colon M\to M$ be a positive Dunford-Schwartz operator, and let $1<p<\infty$.
Then there exists a measurable set $\Omega'\subset \Omega$ with $\Pdb(\Omega')=1$ such that, for every $\omega\in\Omega'$ and every $x\in L^p(M)$, the following hold:
\begin{itemize}
    \item[(1)]
    $$
    \frac{1}{W_N}\sum_{n=1}^N X_n(\omega)\, T_p^n(x)
    \,\longrightarrow Q_{T_p}(x)\quad\hbox{b.a.u.}
    \quad\hbox{as } N\to\infty.
    $$
    \item[(2)]
    $$
    \frac{1}{m}\sum_{k=1}^m T_p^{n_k(\omega)}(x)
    \,\longrightarrow Q_{T_p}(x)\quad\hbox{b.a.u.}
    \quad\hbox{as } m\to\infty.
    $$
\end{itemize}
\end{theorem}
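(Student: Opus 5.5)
Write $T=T_p$ and $Q=Q_{T_p}$. The plan is to split $X_n=n^{-\alpha}+Y_n$, exactly as in Section~\ref{4}, and to prove that both resulting sequences lie in $L^p(M;c_0)$. Lemma~\ref{2c0-bau} will then give b.a.u.\ convergence. The deterministic part is already handled by Lemma~\ref{5Cesaro}, so the remaining task is to show that, almost surely,
$$
\Bigl(\frac{1}{W_N}\sum_{n=1}^N Y_n(\omega)\,T^n(x)\Bigr)_{N\ge1}\in L^p(M;c_0).
$$
Part (2) will then follow from part (1) by a non-commutative version of Lemma~\ref{3Equiv}. Write $s_N=X_1+\cdots+X_N$, so that $s_{n_m}=m$. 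For $N=n_m$ we have $\frac1m\sum_{k\le m}T^{n_k}(x)=\frac{W_N}{s_N}\cdot\frac1{W_N}\sum_{n\le N}X_nT^n(x)$, and $W_N/s_N\to1$ almost surely by \eqref{3SLLN}. Passing to the subsequence $(n_m)$ and multiplying the middle factors $v_N$ by scalars tending to $1$ both preserve membership in $L^p(M;c_0)$, since they only act on the $M$-valued middle part of the factorisation.

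The key reduction is a positivity and density argument. Every $x$ is a combination of four positive elements, so I may assume $x\ge0$. I also want the null set to be independent of $x$. To arrange this, I will choose a countable set $D\subset L^p(M)_+\cap M$ that is dense in $L^p(M)_+$, and prove the $c_0$-statement for each $x\in D$ on a full-measure set $\Omega'$.

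The extension from $D$ to all of $L^p(M)_+$ uses a maximal inequality valid for every $\omega$ in a full-measure set. For $y\ge0$ and $\omega$ with $\sup_N s_N(\omega)/W_N\le C(\omega)<\infty$ (true almost surely by \eqref{3SLLN}), positivity gives
$$
0\le \frac1{W_N}\sum_{n\le N}X_n(\omega)T^n(y)\le \frac{1}{W_N}\sum_{n\le N}T^n(y)\le \frac{N}{W_N}\,\frac1N\sum_{n\le N}T^n(y).
$$
The factor $N/W_N$ is unbounded, so this crude bound is \emph{not} good enough. I expect this to be the main obstacle. A Lemma~\ref{4Cesaro}-type Abel summation handles the deterministic weights $n^{-\alpha}$, but the random weights $X_n$ are not monotone. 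My first attempt is instead to replace $X_n$ by the larger dominating weights
$$
\widetilde X_n = C\,n^{-\alpha}+(X_n-C n^{-\alpha})_+ ,
$$
with the goal of a domination bounded by $C(\omega)\bigl(\sup_N\frac1{W_N}\sum n^{-\alpha}T^n(y)\bigr)$. The $L^p(M;\ell^\infty)$-norm of the deterministic weighted averages is controlled by $C_p\|y\|_p$, via the Junge--Xu maximal inequality and the Abel decomposition \eqref{4Expan} (positive coefficients summing to at most $1+\alpha$). If this fails, the fallback is to accept a dependence of $\Omega'$ on $x$ only through $D$, plus a maximal bound for the fluctuation part obtained from a random $L^p(M;\ell^\infty)$ type estimate. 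With a uniform almost-sure bound $\|(\cdot)\|_{L^p(\ell^\infty)}\le K(\omega)\|y\|_p$ in hand, the standard closedness of $L^p(M;c_0)$ transfers the $c_0$ property from $D$ to its closure.

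For fixed $x\in D\subset M\cap L^p$, the fluctuation term will be handled with the concentration approach of Proposition~\ref{4Automatic}. I will apply it in the Banach space $L^p(M;\ell^\infty)$, or more concretely to the tails $(u_N)_{N\ge N_0}$. Kronecker's lemma shows that it suffices to know that the series $\sum_n W_n^{-1}Y_n(\omega)T^n(x)$ converges in a way that controls $\ell^\infty$-valued tails. I will bound the partial sums by positive dominating elements: writing $Y_n=Y_n^+-Y_n^-$ and using $T^n(x)\ge0$, the tails satisfy
$$
-\sum_{n\ge N_0}W_n^{-1}Y_n^-T^n(x)\le \sum_{N_0\le n\le N}W_n^{-1}Y_nT^n(x)\le \sum_{n\ge N_0}W_n^{-1}Y_n^+T^n(x).
$$
This bound is too crude, because $\sum W_n^{-1}n^{-\alpha}$ diverges. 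I will therefore apply it only after Abel summation, grouping dyadic blocks $[2^j,2^{j+1})$ and using the $L^p$-norm concentration of $\sum_{n\in\text{block}}Y_nT^n(x)$ (type $p$ and Borel--Cantelli) together with Corollary~\ref{Criteria-sa}. Alternatively, I can work in $L^2$ for $x\in M\cap L^1$ and interpolate. Making this blockwise estimate summable is the step where I expect the real work.
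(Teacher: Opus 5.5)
Your proposal has a genuine gap at the point you flag as the main obstacle. The theorem needs one full-measure set $\Omega'$ that works for \emph{every} $x\in L^p(M)$, and nothing in your plan produces it.

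\begin{itemize}
\item The dominating weights give nothing. $\widetilde X_n=\max\{X_n,Cn^{-\alpha}\}$, so $\sum_n\widetilde X_nT^n(y)$ still contains the random sum $\sum_nX_nT^n(y)$ essentially unchanged.
\item Positivity alone cannot bound $\frac1{W_N}\sum_{n\le N}X_n(\omega)T^n(y)$ by $C(\omega)$ times the deterministic averages. A term with $X_n(\omega)=1$ has weight $1$ against $Cn^{-\alpha}$. For the shift on $\ell^\infty(\mathbb{Z})$ and $y$ a point mass, the two maximal functions differ by a factor $n^{\alpha}$ at site $n$.
\item Your fallback presupposes an almost sure bound $K(\omega)\|y\|_p$ uniform in $y$, which is the whole difficulty. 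Type-$p$ estimates as in Proposition~\ref{4Automatic} only control $\|\sum Y_nT^n(y)\|$ for one $y$ at a time.
\item For a general semifinite $M$, $L^p(M)$ need not be separable, so a countable dense $D$ may not exist.
\end{itemize}
The missing idea is an \emph{operator-norm} estimate: almost surely,
\[
\frac1{W_N}\Bignorm{\sum_{n=1}^N Y_n(\omega)T_p^n}_{L^p\to L^p}\le C_\omega N^{-\varepsilon},\qquad N\ge1 .
\]
The paper proves this on $L^2(M)$ in Proposition~\ref{5Hilbert}. It bounds the sup-norm of the random trigonometric polynomial $\sum_{n\le N}Y_n(\omega)\xi^n$ using Chernoff's inequality on a $\frac1{2N}$-net of $\Tdb$, Bernstein's inequality and Borel--Cantelli, then applies von Neumann's inequality. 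On $L^1(M)$ and on $M$ it uses the trivial bound $\frac1{W_N}\sum_{n\le N}|Y_n(\omega)|\le C_\omega$ from Lemma~\ref{5L1}, and then interpolates. With this estimate no density argument is needed.

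Your fixed-$x$ step is also unfinished, and the sandwich you write (between $\pm$ sums of $W_n^{-1}Y_n^{\pm}T^n(x)$) is the wrong object, as you note yourself. The right route has three steps:
\begin{itemize}
\item The decay above is summable along $N_k=\lfloor r^k\rfloor$. Hence $\sum_k u_k\otimes\epsilon_k$ converges absolutely in $L^p(M;\ell^\infty)$, and the lacunary fluctuation sequence lies in $L^p(M;c_0)$. For a single $x$, type $p$ would also give this.
\item To reach all $N$, use positivity of the full weights $X_n\ge0$. For $x\ge0$ and $N_k\le N<N_{k+1}$,
\[
\frac1{W_N}\sum_{n\le N}X_n(\omega)T^n(x)\le\frac{W_{N_{k+1}}}{W_N}\cdot\frac1{W_{N_{k+1}}}\sum_{n\le N_{k+1}}X_n(\omega)T^n(x),
\]
with a similar lower bound using $N_k$. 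Then split each lacunary average into its $Y_n$ part and the deterministic part handled by Lemma~\ref{5Cesaro}.
\item The ratio $W_{N_{k+1}}/W_{N_k}$ leaves an error of order $(r^{1-\alpha}-1)Q(x)$. So $r$ must be taken close to $1$; dyadic blocks leave a non-vanishing error. Conclude with Lemma~\ref{2sa}(ii) and Lemma~\ref{2Car-c0}.
\end{itemize}
Your deduction of (2) from (1) is fine: pass to the subsequence $(n_m)$ and rescale the middle factors. Note also that $\bigl((W_{n_m}/s_{n_m}-1)Q(x)\bigr)_m$ lies in $L^p(M;c_0)$.
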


The idea is to decompose the random ergodic averages into a deterministic part and a fluctuation part
\[
X_n = n^{-\alpha} + Y_n,
\quad \text{where } Y_n = X_n - \mathbb{E}[X_n],
\]
and treat these contributions separately. To control the fluctuation part, we establish a strengthening of Corollary~\ref{4NormCV} in the Hilbert space setting, together with a weaker version of Proposition~\ref{4Automatic} that holds without any geometric assumption on $X$. 

\begin{proposition}\label{5Hilbert}
Let $H$ be a Hilbert space, and let $T\colon H\to H$ be a contraction. 
There exists $\varepsilon>0$ such that the following holds.
For almost every $\omega\in \Omega$, there exists a constant
$C_\omega\geq 0$ such that 
\begin{equation}\label{Lac}
\forall\, N\geq 1,\qquad \frac{1}{W_{N}}\,
\Bignorm{\sum_{n=1}^N
Y_n(\omega) T^n}_{H\to H}\leq C_\omega N^{-\varepsilon}.
\end{equation}
\end{proposition}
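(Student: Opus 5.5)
The plan is to reduce the operator-norm estimate \eqref{Lac} to a scalar estimate for random trigonometric polynomials, and then to control those polynomials by a concentration inequality combined with a Borel--Cantelli argument. This is essentially the argument in \cite[Appendix~1]{B}.

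\emph{Step 1 (reduction to the unit circle).} By the Sz.-Nagy dilation theorem, which is equivalent here to von Neumann's inequality, every contraction $T$ on $H$ satisfies
$$
\Bignorm{\sum_{n=1}^N a_n T^n}_{H\to H}\leq \sup_{|z|=1}\Bigl|\sum_{n=1}^N a_n z^n\Bigr|
$$
for all complex scalars $a_1,\ldots,a_N$. Applied with $a_n=Y_n(\omega)$, this shows that it suffices to find $\varepsilon>0$ such that, for almost every $\omega$, the random polynomials $P_N^\omega(z):=\sum_{n=1}^N Y_n(\omega)z^n$ satisfy
$$
\sup_{|z|=1}|P_N^\omega(z)|\leq C_\omega\,W_N\,N^{-\varepsilon}, \qquad N\geq 1.
$$
The constant $C_\omega$ and the exponent $\varepsilon$ then do not depend on $T$ or $H$.

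\emph{Step 2 (concentration at a fixed point).} Fix $z$ with $|z|=1$. The real part $\sum_n Y_n\,\mathrm{Re}(z^n)$ is a sum of independent, centered random variables. Each summand is bounded by $1$ and has variance at most $\mathbb{E}[Y_n^2]\leq n^{-\alpha}$, so the total variance is at most $W_N$; the imaginary part is handled identically. Bernstein's inequality then gives, for every $t>0$,
$$
\Pdb\bigl(|P_N(z)|>2t\bigr)\leq 4\exp\Bigl(-\frac{t^2}{2(W_N+t/3)}\Bigr).
$$
Take $t=K\sqrt{W_N\log N}$ with $K$ a large constant. Since $W_N\sim N^{1-\alpha}/(1-\alpha)$ by \eqref{3WN}, we have $t\leq W_N$ for $N$ large. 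The bound above is then at most $4N^{-K^2/3}$.

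\emph{Step 3 (discretisation, union bound, Borel--Cantelli).} Let $G_N$ be the set of $N^3$-th roots of unity. Since $P_N$ has degree $N$, Bernstein's inequality for polynomials gives $\|P_N'\|_\infty\leq N\|P_N\|_\infty$ on the circle. Every point of the circle lies within $2\pi/N^3$ of $G_N$, so
$$
\sup_{|z|=1}|P_N|\leq \max_{G_N}|P_N|+\frac{2\pi}{N^2}\sup_{|z|=1}|P_N|,
$$
and hence $\sup_{|z|=1}|P_N|\leq 2\max_{G_N}|P_N|$ for $N\geq 3$. A union bound over $G_N$ yields
$$
\Pdb\Bigl(\sup_{|z|=1}|P_N|>4K\sqrt{W_N\log N}\Bigr)\leq 4N^{3-K^2/3}.
$$
Choosing $K=4$, say, makes the right-hand side summable in $N$. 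Borel--Cantelli then shows that, almost surely, $\sup_{|z|=1}|P_N^\omega|\leq 4K\sqrt{W_N\log N}$ for all $N\geq N_\omega$.

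\emph{Step 4 (conclusion).} For these $N$, dividing by $W_N$ gives the bound
$$
\frac{1}{W_N}\sup_{|z|=1}|P_N^\omega(z)|\lesssim \sqrt{\frac{\log N}{W_N}}\lesssim N^{-\frac{1-\alpha}{2}}\sqrt{\log N}\leq C\,N^{-\varepsilon}
$$
for any fixed $\varepsilon<(1-\alpha)/2$. The finitely many $N<N_\omega$ are absorbed into $C_\omega$, using the trivial bound $|P_N|\leq N$. Combined with Step 1, this gives \eqref{Lac}.

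The main obstacle is Step 3. A pointwise bound at each fixed $z$ has to be upgraded to a bound uniform over the whole circle, and this requires both the polynomial Bernstein inequality (to pass from the finite grid to the circle) and a grid small enough that the union bound stays summable. The crucial quantitative point is that the variance proxy is $W_N$ rather than $N$: this is what makes the error $\sqrt{W_N\log N}$ a genuine power saving relative to $W_N$.
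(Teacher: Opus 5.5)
Your proof is correct and is essentially the paper's argument: von Neumann's inequality reduces everything to $\sup_{\mathbb{T}}|\sum_{n\le N}Y_n(\omega)z^n|$, which is then controlled by a finite net on the circle with Bernstein's polynomial inequality, a pointwise exponential concentration bound for the independent centred $Y_n$, a union bound and Borel--Cantelli. The differences are only quantitative. Your threshold $\sqrt{W_N\log N}$ with an $N^3$-point grid gives any $\varepsilon<(1-\alpha)/2$, whereas the paper's threshold $W_N^{3/4}$ with a net of about $4\pi N$ points gives $\varepsilon=(1-\alpha)/4$. Also, with the spacing $2\pi/N^3$ you used, the factor-$2$ comparison holds for $N\geq 4$ rather than $N\geq 3$, which is harmless.
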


\begin{proof}
Let $\Tdb$ denote the unit circle, equipped with its normalized Haar measure, and for any integer $n\geq 1$, let $e_n\in L^\infty(\Tdb)$ be defined by $e_n(\xi)=\xi^n$ for all $\xi\in\Tdb$. Fix $N\geq 1$ and define $\Gamma_N\colon \Omega\to L^\infty(\Tdb)$ by 
$$
\Gamma_N(\omega) = \frac{1}{W_N}\sum_{n=1}^N Y_n(\omega)e_n.
$$
Let $\Delta_N$ be a $\frac{1}{2N}$-net of $\Tdb$ with
${\rm Card}(\Delta_N)\leq 4\pi N$. 
Given any $\gamma\in\Tdb$, there exists $\xi\in\Delta_N$ such that 
$\vert \xi-\gamma\vert\leq\frac{1}{2N}$. Then, by Bernstein's 
theorem and the Mean Value theorem,
\begin{align*}
\Bigl\vert\sum_{n=1}^N Y_n(\omega) \gamma^n
- \sum_{n=1}^N Y_n(\omega) \xi^n\Bigr\vert
&\leq N\vert \xi-\gamma\vert
\Bignorm{\sum_{n=1}^N Y_n(\omega) e_n}_\infty\\
&\leq \frac12 
\Bignorm{\sum_{n=1}^N Y_n(\omega) e_n}_\infty,
\end{align*}
for all $\omega\in\Omega$.
This implies
$$
\Bigl\vert\sum_{n=1}^N Y_n(\omega) \gamma^n\Bigr\vert\leq
\frac12 
\Bignorm{\sum_{n=1}^N Y_n(\omega) e_n}_\infty
+ \Bigl\vert\sum_{n=1}^N Y_n(\omega) \xi^n\Bigr\vert,
$$
and hence
$$
\Bignorm{\sum_{n=1}^N Y_n(\omega) e_n}_\infty\leq 
2 \sup\Bigl\{\Bigl\vert\sum_{n=1}^N Y_n(\omega) \xi^n\Bigr\vert\,:\, \xi\in\Delta_N\Bigr\},
$$
for all $\omega\in\Omega$. We deduce from this estimate that 
\begin{align*}
\Pdb\Bigl(\bignorm{\Gamma_N(\,\cdotp)}_\infty\geq W_N^{-\frac14} \Bigr)
& \leq
\Pdb\biggl(\sup_{\xi\in\Delta_N}
\Bigl\vert\sum_{n=1}^N Y_n(\,\cdotp) \xi^n\Bigr\vert
\geq \frac{W_N^\frac34}{2}\biggr)\\
& \leq \sum_{\xi\in\Delta_N} \Pdb\biggl(
\Bigl\vert\sum_{n=1}^N Y_n(\,\cdotp) \xi^n\Bigr\vert\geq 
\frac{W_N^\frac34}{2}\biggr)\\
&\leq (4\pi N) \sup_{\xi\in\Delta_N}\Pdb\biggl(\Bigl\vert\sum_{n=1}^N Y_n(\,\cdotp) \xi^n\Bigr\vert\geq 
\frac{W_N^\frac34}{2}\biggr).
\end{align*}

Fix $\xi\in \Delta_N$. Since the
$Y_n$ are centered and independent, we have
$$
\sigma_N:=
{\rm Var}\Bigl(\sum_{n=1}^N
Y_n\xi^n\Bigr)^\frac12=\Bignorm{\sum_{n=1}^N
Y_n\xi^n}_2 = \Bigl(\sum_{n=1}^N \norm{Y_n}_2^2\Bigr)^\frac12.
$$
A computation in the proof of Proposition \ref{4Automatic}
shows that $\norm{Y_n}_2^2\sim n^{-\alpha}$, and hence 
$\sigma_N\sim W_N^\frac12.$
Set $\lambda_N=(2\sigma_N)^{-1}W_N^{\frac34}$. Then
\begin{equation}\label{5LambdaN}
\lambda_N^2\sim \frac{W_N^\frac12}{4}.
\end{equation}
Moreover, by Chernoff's inequality
(see e.g. \cite[Theorem 1.8]{Tao}), 
$$
\Pdb\biggl(\Bigl\vert\sum_{n=1}^N Y_n(\,\cdotp) \xi^n\Bigr\vert\geq 
\frac{W_N^\frac34}{2}\biggr)
\leq 2\max\Bigl\{e^{-\frac{\lambda_N^2}{4}},
e^{-\frac{W_N^\frac34}{4}}\Bigr\}.
$$
Hence, for $N$ large enough,
$$
\Pdb\biggl(\Bigl\vert\sum_{n=1}^N Y_n(\,\cdotp) \xi^n\Bigr\vert\geq 
\frac{W_N^\frac34}{2}\biggr)
\leq 2\,e^{-\frac{\lambda_N^2}{4}}.
$$
Taking the supremum over $\xi\in\Delta_N$, we obtain
$$
\Pdb\Bigl(\bignorm{\Gamma_N(\,\cdotp)}_\infty\geq W_N^{-\frac14} \Bigr)
\lesssim N \,e^{-\frac{\lambda_N^2}{4}}.
$$
By (\ref{5LambdaN}) and (\ref{3WN}), this implies that
$$
\sum_{N=1}^\infty\Pdb\Bigl( \bignorm{\Gamma_N(\,\cdotp)}_\infty\geq W_N^{-\frac14}  \Bigr)\,<\infty.
$$
Let $\varepsilon =(1-\alpha)/4$. 
By the Borel--Cantelli lemma and (\ref{3WN}), we deduce that for almost every $\omega\in \Omega$, there exists a constant
$C_\omega\geq 0$ such that 
$$
\forall\, N\geq 1,\qquad \frac{1}{W_{N}}\,\Bignorm{
\sum_{n=1}^N
Y_n(\omega) e_n}_{L^\infty(\mathbb T)}\leq C_\omega N^{-\varepsilon}.
$$
Finally, by von Neumann's inequality,
$$
\Bignorm{\sum_{n=1}^N Y_n(\omega) T^n}_{H\to H}
\leq \Bignorm{
\sum_{n=1}^N
Y_n(\omega) e_n}_{L^\infty(\mathbb T)},
\qquad\omega\in\Omega.
$$
This yields the result.
\end{proof}

\begin{remark} 
Assume that $T\colon H\to H$ 
is a power bounded operator on a Hilbert space. By \cite[Corollary 3.9]{Pe}, 
there exists a constant $C\geq 1$ such that 
$$
\Bignorm{\sum_{n=1}^N Y_n(\omega) T^n}_{H\to H}
\leq C\,{\rm Log}(N)\,\Bignorm{
\sum_{n=1}^N
Y_n(\omega) e_n}_{L^\infty(\mathbb T)},
$$
for all $N\geq 2$ and all $\omega\in\Omega$.
Since ${\rm Log}(N)\, N^{-\varepsilon}\lesssim 
N^{-\frac{\varepsilon}{2}}$, the above proof shows that Proposition \ref{5Hilbert} remains valid
for power bounded operators.
\end{remark}

\begin{lemma}\label{5L1}
Let $X$ be a Banach space and let $(x_n)_{n\geq 1}$ be a bounded sequence in $X$. 
For almost every $\omega\in \Omega$, there exists a constant
$C_\omega\geq 0$ such that 
$$
\forall\, N\geq 1,\qquad \frac{1}{W_N}\,\Bignorm{\sum_{n=1}^N Y_n(\omega) x_n}\leq C_\omega.
$$
\end{lemma}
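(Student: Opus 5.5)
The plan is to use the triangle inequality and reduce everything to a scalar statement. For every $\omega$ and $N$,
$$
\frac{1}{W_N}\Bignorm{\sum_{n=1}^N Y_n(\omega)x_n}\leq K\,\frac{1}{W_N}\sum_{n=1}^N\vert Y_n(\omega)\vert ,
$$
where $K=\sup_n\norm{x_n}$. So it suffices to show that, for almost every $\omega$, the scalar sequence $\frac{1}{W_N}\sum_{n=1}^N\vert Y_n(\omega)\vert$ is bounded. The null set we obtain will then not depend on $(x_n)$ or on $X$, which is convenient for later use.

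Next we bound $\vert Y_n\vert$ pointwise. We have $\vert Y_n\vert\leq X_n+n^{-\alpha}$; indeed $\vert Y_n\vert=X_n(1-n^{-\alpha})+(1-X_n)n^{-\alpha}$. Hence
$$
\frac{1}{W_N}\sum_{n=1}^N\vert Y_n(\omega)\vert\leq \frac{X_1(\omega)+\cdots+X_N(\omega)}{W_N}+1 .
$$
By (\ref{3SLLN}), for almost every $\omega$ the ratio $(X_1+\cdots+X_N)(\omega)/W_N$ tends to $1$. A convergent sequence is bounded, so there is a constant $C'_\omega$ with $\frac{1}{W_N}\sum_{n\le N}\vert Y_n(\omega)\vert\leq C'_\omega$ for all $N$. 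We then take $C_\omega=KC'_\omega$.

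There is essentially no obstacle here. The only input is the strong law (\ref{3SLLN}), which is quoted from \cite{FLW}. If one preferred not to rely on it, the fallback is short. Apply Kolmogorov's two-series argument to the centered independent variables $Y_n/W_n$, whose variances are $\sim n^{-\alpha}W_n^{-2}\sim n^{\alpha-2}$ up to constants and hence summable. Then $\sum_n Y_n/W_n$ converges almost surely. Kronecker's lemma gives $\frac{1}{W_N}\sum_{n\le N}Y_n\to 0$, which is exactly (\ref{3SLLN}).
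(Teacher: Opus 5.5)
Your proof is correct. Its skeleton is exactly the paper's: the triangle inequality, then $\vert Y_n\vert\leq X_n+n^{-\alpha}$, which reduces everything to almost sure boundedness of $\frac{1}{W_N}\sum_{n\le N}X_n(\omega)$.

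The only difference is where that boundedness comes from. The paper does not invoke (\ref{3SLLN}) here. It proves boundedness directly from the multiplicative Chernoff bound $\Pdb\bigl(\sum_{n\le N}X_n\geq(1+\delta)W_N\bigr)\leq a^{W_N}$ with $a<1$. This is summable in $N$ because $W_N\sim N^{1-\alpha}/(1-\alpha)$, and Borel--Cantelli then concludes.

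Your appeal to (\ref{3SLLN}) is shorter, since that fact is already stated in Section~\ref{3}. It also gives more: the ratio tends to $1$, not merely stays bounded. The paper's route needs only an upper-tail estimate. It is self-contained and in the same concentration-inequality spirit as Proposition~\ref{5Hilbert}.

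Your fallback is also correct. The variances of $Y_n/W_n$ are $\sim(1-\alpha)^2n^{\alpha-2}$, which are summable since $\alpha<1$. Kolmogorov's convergence criterion and Kronecker's lemma then give the claim.

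Your remark that the exceptional null set depends neither on $X$ nor on $(x_n)$ is accurate, and the paper's proof shares this property. It is what makes the lemma usable for the uniform-in-$x$ operator-norm bound in the proof of Theorem~\ref{5Main}.
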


\begin{proof}
Recall that $W_N$ is the expectation 
of $X_1+\cdots+X_N$. Hence, by the 
multiplicative Chernoff bound (see e.g. \cite[Theorem 4.1]{MR}), 
we have
$$
\Pdb\Bigl(\sum_{n=1}^N X_n\geq (1+\delta)
W_N\Bigr)\leq\biggl(\frac{e^\delta}{(1+\delta)^{1+\delta}}\biggr)^{W_N},
$$
for all $\delta>0$ and all $N\geq 1$.
Choose $\delta$ such that 
$$
a := \frac{e^\delta}{(1+\delta)^{1+\delta}} < 1.
$$
By (\ref{3WN}), we have $\sum_{N=1}^\infty a^{W_N}<\infty$. Hence,
$$
\sum_{N=1}^\infty \Pdb \Bigl(\frac{1}{W_N}\,\sum_{n=1}^N X_n\geq (1+\delta)\Bigr)<\infty.
$$
By the Borel--Cantelli lemma, 
this implies that for almost every $\omega\in\Omega$, the sequence
\begin{equation}\label{Bdd}
\Bigl(\frac{1}{W_N}\,\sum_{n=1}^N X_n(\omega)\Bigr)_{N\geq 1}
\quad\hbox{}
\end{equation}
is bounded. Using the boundedness of $(x_n)_{n\geq 1}$, we write
\begin{align*}
\frac{1}{W_N}\,\Bignorm{\sum_{n=1}^N Y_n(\omega) x_n}
& \lesssim \frac{1}{W_N}\,\sum_{n=1}^N \vert Y_n(\omega)\vert\\
& \lesssim \frac{1}{W_N}\,\sum_{n=1}^N \bigl(X_n(\omega) + n^{-\alpha}\bigr) \\
& \lesssim \Biggl(\frac{1}{W_N}\,\sum_{n=1}^N X_n(\omega)\Biggr) + 1.
\end{align*}
The result now follows from (\ref{Bdd}).
\end{proof}

\begin{proof}[Proof of Theorem \ref{5Main}]
Let $T$ be as in Theorem~\ref{5Main}, and fix $1<p<\infty$. 
If $1<p\leq 2$, then by interpolation,
\[
\Big\|\sum_{n=1}^N Y_n(\omega) T_p^n\Big\|_{L^p\to L^p}
\leq 
\Big\|\sum_{n=1}^N Y_n(\omega) T_p^n\Big\|_{L^1\to L^1}^{2(\frac1p -\frac12)}
\Big\|\sum_{n=1}^N Y_n(\omega) T_p^n\Big\|_{L^2\to L^2}^{2(1-\frac1p)},
\]
for all $N\geq 1$ and $\omega\in\Omega$.  
Applying Proposition~\ref{5Hilbert} with $H=L^2(M)$ and Lemma~\ref{5L1} with $X=L^1(M)$, we obtain the existence of $\varepsilon>0$ and a measurable set $\Omega'\subset\Omega$ with $\Pdb(\Omega')=1$ such that, for every $\omega\in\Omega'$, there exists a constant $C_\omega\geq 0$ for which
\begin{equation}\label{5LacLp}
\forall\, N\geq 1,\qquad 
\frac{1}{W_N}\,
\Big\|\sum_{n=1}^N Y_n(\omega) T_p^n\Big\|_{L^p\to L^p}
\leq C_\omega N^{-\varepsilon}.
\end{equation}
An analogous argument applies when $2\leq p<\infty$, using interpolation between $L^2(M)$ and $L^\infty(M)$. 

From now on, we write $T = T_p$ and $Q = Q_{T_p}$ for simplicity. Our main objective is to prove that for every $\omega \in \Omega'$ and every $x \in L^p(M)$,
\begin{equation}\label{5Goal}
\left(\frac{1}{W_N}\sum_{n=1}^N X_n(\omega) T^n(x) - Q(x)\right)_{N\geq 1}
\in L^p(M; c_0).
\end{equation}

Fix $x\in L^p(M)$ and $\omega\in\Omega'$. We consider the sequence
\[
\biggl(\frac{1}{W_{N}}\,
\sum_{n=1}^N
Y_n(\omega) T^n(x)\biggr)_{N\geq 1}
\]
and introduce lacunary subsequences in order to exploit the decay provided by (\ref{5LacLp}). Although this decay is not summable over all $N$, it becomes summable along lacunary subsequences. More precisely, for an arbitrary $r>1$, we define 
\[
u_k = 
\frac{1}{W_{\lfloor r^k \rfloor}}\,
\sum_{n=1}^{\lfloor r^k \rfloor} 
Y_n(\omega) T^n(x),\qquad k\geq 1.
\]
Let $(\epsilon_k)_{k\geq 1}$ denote the standard Schauder basis of $c_0$. Then 
\[
\norm{u_k\otimes \epsilon_k}_{L^p(M;\ell^\infty)}
=\norm{u_k}
\]
for all $k\geq 1$, and hence, by (\ref{5LacLp}),
\[
\sum_{k=1}^\infty \norm{u_k\otimes \epsilon_k}_{L^p(M;\ell^\infty)}<\infty.
\]
Since $L^p(M;c_0)$ is closed, it follows that the sum
\[
v:=\sum_{k=1}^\infty u_k\otimes \epsilon_k
\]
belongs to $L^p(M;c_0)$. Clearly, $v=(u_k)_{k\geq 1}$, and therefore
\begin{equation}\label{5Lacun}
\biggl(\frac{1}{W_{\lfloor r^k \rfloor}}\,
\sum_{n=1}^{\lfloor r^k \rfloor} 
Y_n(\omega) T^n(x)\biggr)_{k\geq 1}
\in L^p(M;c_0).
\end{equation}

We now turn to the proof of (\ref{5Goal}). The idea is to pass from the control obtained along the lacunary subsequence (\ref{5Lacun}) to the full sequence by comparing each index $N$ with the neighbouring lacunary indices.

Since $L^p(M)$ is spanned by its positive elements, we may assume that $x\in L^p(M)_+$. We set $x_n =T^n(x)$ for all $n\geq 1$, so that $x_n\geq 0$.

Let $\eta>0$. By Lemmas \ref{5Cesaro} and \ref{2Car-c0}, there exists $N_0\geq 1$ such that
\[
\Big\|\Bigl(\frac{1}{W_N}\sum_{n=1}^N n^{-\alpha}x_n -Q(x)\Bigr)_{N\geq N_0}\Big\|_{L^p(M;\ell^\infty)}<\eta.
\]
By Lemma \ref{2sa}, (1), this yields the existence of $c\in L^p(M)_+$ with $\norm{c}<\eta$ such that
\[
-c\leq \frac{1}{W_N}\sum_{n=1}^N n^{-\alpha}x_n -Q(x) \leq c,\qquad N\geq N_0.
\]
According to (\ref{3WN}), we may assume that $N_0$ is large enough so that
\begin{equation}\label{Eps1}
\frac{1}{1+\eta}(1-\alpha)\leq \frac{N^{1-\alpha}}{W_N}\leq (1+\eta)(1-\alpha),\qquad N\geq N_0.
\end{equation}
We choose $r>1$ such that
\begin{equation}\label{Eps2}
r^{1-\alpha}\leq 1+\eta,
\end{equation}
and apply (\ref{5Lacun}) with this value of $r$. By Lemma \ref{2Car-c0}, there exists $k_0\geq 1$ such that
\[
\Big\|\Bigl(\frac{1}{W_{\lfloor r^k \rfloor}}
\sum_{n\leq \lfloor r^k \rfloor} Y_n(\omega)x_n\Bigr)_{k\geq k_0}\Big\|_{L^p(M;\ell^\infty)}<\eta.
\]
By Lemma \ref{2sa}, (1), there exists $d\in L^p(M)_+$ with $\norm{d}_p<\eta$ such that
\[
-d\leq \frac{1}{W_{\lfloor r^k \rfloor}}
\sum_{n\leq \lfloor r^k \rfloor} Y_n(\omega)x_n \leq d,\qquad k\geq k_0.
\]
We may assume that $k_0$ is large enough so that $\lfloor r^k \rfloor<\lfloor r^{k+1} \rfloor$ for all $k\geq k_0$, and we set $\widetilde{N}_0=\max\{r^{k_0},N_0\}$.

Let $N\geq \widetilde{N}_0$, and let $k\geq k_0$ be such that
\[
\lfloor r^k \rfloor \leq N < \lfloor r^{k+1} \rfloor.
\]
Since $X_n(\omega)\in\{0,1\}$ and $x_n\geq 0$, we have
\[
\sum_{n=1}^N X_n(\omega)x_n \leq \sum_{n=1}^{\lfloor r^{k+1}\rfloor} X_n(\omega)x_n,
\]
and therefore
\[
\frac{1}{W_N}\sum_{n=1}^N X_n(\omega)x_n
\leq \frac{W_{\lfloor r^{k+1}\rfloor}}{W_N}
\Bigl(\frac{1}{W_{\lfloor r^{k+1}\rfloor}}
\sum_{n=1}^{\lfloor r^{k+1}\rfloor} X_n(\omega)x_n\Bigr).
\]
On the one hand,
\[
\frac{W_{\lfloor r^{k+1}\rfloor}}{W_N}
\leq \frac{W_{\lfloor r^{k+1}\rfloor}}{W_{\lfloor r^k\rfloor}}
\leq (1+\eta)^3,
\]
by (\ref{Eps1}) and (\ref{Eps2}). On the other hand,
\begin{align*}
\frac{1}{W_{\lfloor r^{k+1}\rfloor}}
\sum_{n=1}^{\lfloor r^{k+1}\rfloor} X_n(\omega)x_n
&= \frac{1}{W_{\lfloor r^{k+1}\rfloor}}
\sum_{n=1}^{\lfloor r^{k+1}\rfloor} Y_n(\omega)x_n
+ \frac{1}{W_{\lfloor r^{k+1}\rfloor}}
\sum_{n=1}^{\lfloor r^{k+1}\rfloor} n^{-\alpha}x_n\\
&\leq d+c+Q(x).
\end{align*}
Hence
\[
\frac{1}{W_N}\sum_{n=1}^N X_n(\omega)x_n
\leq (1+\eta)^3(d+c+Q(x)).
\]
A similar argument yields
\[
\frac{1}{W_N}\sum_{n=1}^N X_n(\omega)x_n
\geq -(1+\eta)^{-3}(d+c-Q(x)).
\]
Subtracting $Q(x)$ gives
\[
-\gamma \leq \frac{1}{W_N}\sum_{n=1}^N X_n(\omega)x_n - Q(x) \leq \gamma,
\]
where
\[
\gamma = (1+\eta)^3(d+c) + ((1+\eta)^3-1)Q(x).
\]
By Lemma \ref{2sa}, (ii),
\begin{align*}
\Big\|\Bigl(\frac{1}{W_N}\sum_{n=1}^N X_n(\omega)x_n - Q(x)\Bigr)_{N\geq \widetilde{N}_0}\Big\|_{L^p(M;\ell^\infty)}
&\leq 3\norm{\gamma}_p\\
&\leq 3\Bigl(2\eta(1+\eta)^3 + ((1+\eta)^3-1)\norm{x}_p\Bigr).
\end{align*}
Since this bound tends to $0$ as $\eta\to 0$, we obtain
\[
\Big\|\Bigl(\frac{1}{W_N}\sum_{n=1}^N X_n(\omega)x_n - Q(x)\Bigr)_{N\geq \widetilde{N}}\Big\|_{L^p(M;\ell^\infty)}
\longrightarrow 0 \quad \text{as } \widetilde{N}\to\infty.
\]
By Lemma \ref{2Car-c0}, this proves (\ref{5Goal}).

We may now deduce part (1) of the theorem by 
applying Lemma \ref{2c0-bau}. To prove part (2), we combine 
(\ref{5Goal}) and (\ref{3SLLN}). From these two properties,
we obtain a measurable set
$\Omega''\subset\Omega$ with $\Pdb(\Omega'')=1$, such that
we both have
$$
\biggl(\frac{1}{\sum_{n=1}^N X_n(\omega)}\,
\sum_{n=1}^N
X_n(\omega) T^n(x)\, -Q(x)\biggr)_{N\geq 1}\,\in L^p(M;c_0),
\qquad \omega\in\Omega'',\,
x\in L^p(M),
$$
and
$$
\sum_{n=1}^\infty X_n(\omega)\,=\infty,\qquad \omega\in\Omega''.
$$
Let $x\in L^p(M)$ and let $\omega\in\Omega''$. 
Let $\varepsilon>0$. By Lemma \ref{2c0-bau},
there exists a projection $e\in M$ such that $\tau(1-e)<\varepsilon$
and 
$$
\biggnorm{e\biggl(\frac{1}{\sum_{n=1}^N X_n(\omega)} 
\sum_{n=1}^N X_n(\omega) T^n(x) \, - Q(x)
\biggr)e}_\infty\longrightarrow 0.
$$
This can be rephrased as
$$
\biggnorm{\frac{1}{\sum_{n=1}^N X_n(\omega)} 
\sum_{n=1}^N X_n(\omega) e\bigl(T^n(x)  - Q(x)\bigr)e
}_\infty\longrightarrow 0.
$$
Applying  Lemma \ref{3Equiv} in $X=M$, we deduce that
$$
\biggnorm{e\biggl(\frac{1}{m} 
\sum_{k=1}^m T^{n_k(\omega)}(x) \, - Q(x)\biggr)e
}_\infty = 
\biggnorm{\frac{1}{m} 
\sum_{k=1}^m e\bigl(T^{n_k(\omega)}(x) \, - Q(x)\bigr)e
}_\infty\longrightarrow 0,
$$
This show that $\frac{1}{m} 
\sum_{k=1}^m T^{n_k(\omega)}(x)\to Q(x)$ b.a.u. as expected.
\end{proof}

\begin{remark}
This remark explains how
Theorem \ref{5Main} extends to the non-tracial setting. 
In this remark (and only here), we consider non-tracial
$L^p$-spaces. Let $M$ be a von Neumann algebra and let
$\phi\colon M_+\to[0,\infty]$ be a normal semifinite faithful weight.
For $1\leq p\leq\infty$, let $L^p(M,\phi)$ denote the Haagerup
non-commutative $L^p$-space associated with $(M,\phi)$, see
\cite{HJX,Hiai,Terp0}. The space $L^p(M,\phi;c_0)$ is defined as in the tracial case.

Let $T\colon M\to M$ be a completely positive contraction such that $\phi\circ T\leq\phi$ on $M_+$. For $1\leq p<\infty$, $T$ extends to a positive contraction
$T_p\colon L^p(M,\phi)\to L^p(M,\phi)$ by \cite[Theorem 5.1 $\&$ Remark 5.6]{HJX}.
Assume that $T$ commutes with the modular automorphism group of $\phi$.

Let $1<p<\infty$, and let $Q_{T_p}$ be the projection defined by (\ref{3Decomp}) and (\ref{3MET}). By \cite[Theorem 7.11]{JX}, for all $x\in L^p(M,\phi)$,
\[
\bigl(A_N(T_p)(x) - Q_{T_p}(x)\bigr)_{N\geq 1} \in L^p(M,\phi;c_0).
\]
With this property at hand, (the proof of) Theorem \ref{5Main} carries over to the non-tracial setting. In particular, almost surely, for all $x\in L^p(M,\phi)$,
\begin{equation}\label{5NTc0}
\biggl(\frac{1}{W_N}\sum_{n=1}^N X_n(\omega)\, T_p^n(x) - Q_{T_p}(x)\biggr)_{N\geq 1}
\in L^p(M,\phi;c_0).
\end{equation}

We refer to \cite[Definition 7.9]{JX} for b.a.u. convergence in the non-tracial setting. Combining (\ref{5NTc0}) with
\cite[Lemma 7.10]{JX}, we obtain a measurable set $\Omega'\subset \Omega$ with $\Pdb(\Omega')=1$ such that properties (1) and (2) of Theorem \ref{5Main} hold.

\end{remark}

\subsection{Separating positive contractions}
In this subsection, we study operators acting on a single non-commutative $L^p$-space, in contrast with the setting of Theorem \ref{5Main}.

Let $1<p<\infty$. We say that two elements $x, y \in L^p(M)$ are disjoint if $x^*y = xy^* = 0$. An operator $T \colon L^p(M) \to L^p(M)$ is called separating if it preserves disjointness; that is, whenever $x$ and $y$ are disjoint, so are $T(x)$ and $T(y)$. Such operators are often referred to as Lamperti operators in the literature. We refer to \cite{HRW,LMZ} for background and, in particular, for characterisations of separating operators (see \cite[Theorem 3.1]{LMZ} and \cite[Theorem 3.3]{HRW}).

We define
$$
\C_p = \overline{\rm Conv}^{SOT} \Bigl\{ T \colon L^p(M) \to L^p(M) \,\big|\, 
T \text{ is positive, contractive, and separating} \Bigr\},
$$
where ${\rm Conv}$ denotes the convex hull and $\overline{\rm Conv}^{SOT}$ its closure in the strong operator topology.

\begin{theorem}\label{6SepMain}
Let $T\in\C_p$ and let $Q_T$ be the projection defined by (\ref{3Decomp}) and (\ref{3MET}). Then there exists a measurable set $\Omega'\subset \Omega$ with $\Pdb(\Omega')=1$ such that, for every $\omega\in\Omega'$, the following hold:

\begin{itemize}
\item[(1)] For all $x\in L^p(M)$,
$$
\frac{1}{W_N}\sum_{n=1}^N X_n(\omega)\, T^n(x)\,\longrightarrow Q_{T}(x) 
\quad\text{b.a.u. as } N\to\infty.
$$

\item[(2)] For all $x\in L^p(M)$,
$$
\frac{1}{m}\sum_{k=1}^m T^{n_k(\omega)}(x)\,\longrightarrow Q_{T}(x) 
\quad\text{b.a.u. as } m\to\infty.
$$
\end{itemize}
\end{theorem}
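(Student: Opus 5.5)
The plan is to run the proof of Theorem \ref{5Main} again, changing only the two inputs that used the Dunford--Schwartz hypothesis. Those inputs are the $c_0$-valued ergodic theorem (\ref{5c0}) and the decay estimate (\ref{5LacLp}). Once both are available for $T\in\C_p$, the rest carries over verbatim. Lemma \ref{5Cesaro} only used (\ref{5c0}) and the Abel summation. The lacunary argument and the comparison with neighbouring indices only used the positivity of $T^n(x)$ for $x\ge 0$, together with Lemmas \ref{2sa}, \ref{2Car-c0} and \ref{2c0-bau}. Each $T\in\C_p$ is positive, being an SOT-limit of convex combinations of positive operators. So the reduction to $x\in L^p(M)_+$ and the sandwich bound between $\lfloor r^k\rfloor$ and $\lfloor r^{k+1}\rfloor$ go through unchanged, and part (2) follows from part (1) via Lemma \ref{3Equiv} exactly as before.

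For the first input, I would invoke the dilation theory of \cite{HRW}. Operators in $\C_p$ admit a dilation: there are a larger non-commutative $L^p$-space $L^p(\widetilde M)$, an isometric positive (separating) invertible operator $U$, and positive contractions $J\colon L^p(M)\to L^p(\widetilde M)$ and $P\colon L^p(\widetilde M)\to L^p(M)$ with $T^n=PU^nJ$. For such $U$, the ergodic averages satisfy the maximal inequality and the $c_0$-property. Positive contractions preserve $L^p(\,\cdot\,;c_0)$, since they preserve order intervals $-c\le u_n\le c$ up to the constant in Lemma \ref{2sa}. Transferring through $P$ and $J$ therefore gives
$$\Bigl(\tfrac1N\textstyle\sum_{n=1}^N T^n(x)-Q_T(x)\Bigr)_{N\ge1}\in L^p(M;c_0),$$
which replaces (\ref{5c0}). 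I expect this to be essentially citable from \cite{HRW}, combined with the density argument of \cite{JX}: the set of $x$ where the property holds is closed thanks to the maximal inequality, and it contains $\mathrm{Ker}(I-T)+(I-T)(L^p(M))$.

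The second input, the power decay
$$W_N^{-1}\Bignorm{\textstyle\sum_{n\le N}Y_n(\omega)T^n}_{L^p\to L^p}\le C_\omega N^{-\varepsilon},$$
I expect to be the main obstacle, because $T$ no longer acts on $L^1$ and $L^2$ simultaneously, so interpolation is unavailable. My first attempt would again use the dilation. Writing $\sum_n a_nT^n=P(\sum_n a_nU^n)J$ reduces the problem to the isometry $U$. For a positive separating invertible isometry, the Lamperti structure gives a transference. Coifman--Weiss-type transference for the group generated by $U$ bounds $\norm{\sum a_nU^n}$ by the norm of the convolution operator $f\mapsto \sum a_n f(\cdot-n)$ on $\ell^p(\mathbb Z;L^p(M))$. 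Since the convolution kernel has no operator coefficients, this equals the norm on $\ell^p(\mathbb Z)\otimes L^p(M)$, which is at most the $\ell^p(\mathbb Z)$ multiplier norm. I would then interpolate that multiplier norm between $\ell^1$, which is bounded by $\sum|Y_n|\lesssim W_N$ almost surely by the proof of Lemma \ref{5L1}, and $\ell^2$, which is bounded by $\norm{\sum Y_ne_n}_{L^\infty(\Tdb)}\le C_\omega W_N N^{-\varepsilon}$ by the proof of Proposition \ref{5Hilbert}. This yields (\ref{5LacLp}) with exponent $\varepsilon\min(2/p,2/p')$-ish.

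If transference for $U$ is delicate, a fallback is to apply the commutative argument directly. Positivity and convexity pass the needed bound to $\C_p$: the set of contractions $S$ satisfying $\norm{\sum a_nS^n}\le K$ for a fixed sequence $(a_n)$ is not obviously convex, so I would rather keep the dilation route, noting that SOT-closure is harmless since norms of polynomials are SOT-lower semicontinuous.

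Finally, a single null set must be chosen independently of $T$ and $x$. The bounds on $\sum|Y_n(\omega)|$ and on $\norm{\sum Y_n(\omega)e_n}_\infty$ depend only on $\omega$, so the same $\Omega'$ as in Proposition \ref{5Hilbert} and Lemma \ref{5L1} works.
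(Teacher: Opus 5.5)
Your architecture is the paper's. You reduce to the $c_0$-property (a), cited from \cite[Theorem 1.3]{HRW} plus the density argument of \cite[Theorem 6.3]{JX}, and to the decay estimate (\ref{5LacLp}) for $T$, obtained via dilation, transference to a shift and interpolation. However, one step in (b) fails as justified. From the fact that convolution by $(Y_n(\omega))_n$ has scalar coefficients, you conclude that its norm on $\ell^p_{\mathbb Z}(L^p(\widetilde M))$ is at most its norm on $\ell^p_{\mathbb Z}$. For a non-commutative $L^p$-space there is no Fubini argument behind this, and the inequality $\norm{K\otimes I_{L^p(\widetilde M)}}\le\norm{K}_{\ell^p\to\ell^p}$ is not available. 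Even boundedness of $K\otimes I_{S^p}$ for every bounded Fourier multiplier $K$ on $\ell^p_{\mathbb Z}$, $p\neq 2$, is a well-known open problem (complete boundedness of Fourier multipliers). This is why the paper keeps the vector-valued shift ${\mathfrak s}_{p,Y}$ throughout.

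The repair is simply not to descend to scalars. We have $\ell^p_{\mathbb Z}(L^p(B))=L^p(\ell^\infty_{\mathbb Z}\bar\otimes B)$, and ${\mathfrak s}_{p,Y}$ is the $L^p$-extension of a trace-preserving automorphism, hence of a positive Dunford--Schwartz operator. So (\ref{5LacLp}) holds for it by the proof of Theorem \ref{5Main}. Concretely, you interpolate the vector-valued operator between $L^1$ and $L^2$ (between $L^2$ and $L^\infty$ when $p>2$):
\begin{itemize}
\item on $L^1$ its norm is at most $\sum_n|Y_n(\omega)|$;
\item on $L^2$ the shift is unitary, so von Neumann's inequality bounds the norm by $\norm{\sum_n Y_n(\omega)e_n}_{L^\infty(\Tdb)}$.
\end{itemize}
These are exactly your two bounds, so the fix costs nothing and your uniform choice of $\Omega'$ survives.

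Second, the dilation you posit is stronger than what is available: a single invertible isometry $U$ with $T^n=PU^nJ$ for all $n$ and all $T\in\C_p$. The paper uses \cite[Corollary 4.7]{HRW}. For each convex combination $T_j$ of positive separating contractions and each $N$, this gives only an $N$-dilation $T_j^n=P_{j,N}V_{j,N}^nJ_{j,N}$, $0\le n\le N$, where $V_{j,N}$ is an isometry that need not be surjective. Coifman--Weiss transference uses the whole group $(U^n)_{n\in\mathbb Z}$, so it must be replaced by the one-sided transference of Lemma \ref{6Transfer}. That lemma uses $W_r(y)=(\ldots,0,y,rV(y),(rV)^2(y),\ldots)$ and lets $r\to1$. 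You then pass $j\to\infty$ using SOT lower semicontinuity, as you note.

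The $N$-dependence is harmless in (b), since only powers up to $N$ occur. It does, however, invalidate your first route to (a), which transfers the $c_0$-property through fixed positive maps $P$ and $J$. For (a) you must use the maximal inequality of \cite{HRW} together with the density argument of \cite{JX}. That is your fallback, and it is the paper's argument.
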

The proof will rely on the following transference lemma. 
In this statement, the isometry $V$ is not assumed to be surjective.

For any Banach space $Y$, let 
${\mathfrak s}_{p,Y}\colon \ell^p_{\mathbb Z}(Y) \to \ell^p_{\mathbb Z}(Y)$ 
denote the backward shift operator, defined by
$$
{\mathfrak s}_{p,Y}\bigl((b_k)_k\bigr) = (b_{k+1})_k, 
\qquad (b_k)_k \in \ell^p_{\mathbb Z}(Y).
$$

\begin{lemma}\label{6Transfer}
Let $V\in B(Y)$ be an isometry (not necessarily surjective). 
Then, for any complex polynomial $\varphi$, we have
$$
\|\varphi(V)\colon Y \longrightarrow Y\| 
\leq \|\varphi({\mathfrak s}_{p,Y})\colon 
\ell^p_{\mathbb Z}(Y) \longrightarrow \ell^p_{\mathbb Z}(Y)\|.
$$
\end{lemma}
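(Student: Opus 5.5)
The idea is the classical Coifman--Weiss transference, written with a truncation so that it also works for non-surjective isometries. Write $\varphi(z)=\sum_{n=0}^{d} a_n z^n$ and fix $y\in Y$ with $\|y\|=1$. For a large integer $K$, define a finitely supported sequence $b=(b_k)_{k\in\mathbb Z}\in\ell^p_{\mathbb Z}(Y)$ by
$$
b_k = V^{k}(y) \quad \text{for } 0\le k\le K+d, \qquad b_k=0 \text{ otherwise}.
$$
For $0\le k\le K$ and every $0\le n\le d$, the shifted entry $({\mathfrak s}_{p,Y}^n b)_k=b_{k+n}$ equals $V^{k+n}(y)$. Hence
$$
\bigl(\varphi({\mathfrak s}_{p,Y})b\bigr)_k = \sum_{n=0}^d a_n V^{k+n}(y)=V^k\bigl(\varphi(V)y\bigr),\qquad 0\le k\le K.
$$

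Because $V$ is an isometry, which is the only property of $V$ used and needs no surjectivity, we get $\|V^k(\varphi(V)y)\|=\|\varphi(V)y\|$. Write $\Phi=\|\varphi({\mathfrak s}_{p,Y})\|$ for the operator norm on $\ell^p_{\mathbb Z}(Y)$. Keeping only the indices $0\le k\le K$ gives
$$
(K+1)\,\|\varphi(V)y\|^p \le \bigl\|\varphi({\mathfrak s}_{p,Y})b\bigr\|_{\ell^p_{\mathbb Z}(Y)}^p \le \Phi^p\,\|b\|^p_{\ell^p_{\mathbb Z}(Y)} .
$$
Again using that $V$ is an isometry, $\|b\|^p=\sum_{k=0}^{K+d}\|V^k y\|^p=K+d+1$. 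Therefore
$$
\|\varphi(V)y\|\le \Bigl(\frac{K+d+1}{K+1}\Bigr)^{1/p}\Phi .
$$

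Letting $K\to\infty$ with $d$ fixed gives $\|\varphi(V)y\|\le\Phi$. Taking the supremum over unit vectors $y$ yields the lemma.

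The only point needing care, though it is mild, is the boundary effect. The identity for $(\varphi({\mathfrak s}_{p,Y})b)_k$ fails for $k$ outside $[0,K]$, and we cannot extend $b$ to negative indices by $V^{-1}$ since $V$ need not be invertible. This is why we simply discard those indices via the inequality above and absorb the $d$ extra terms of $\|b\|^p$ through the limit $K\to\infty$.
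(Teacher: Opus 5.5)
Your proof is correct. For $0\le k\le K$ every index $k+n$ stays in $[0,K+d]$, so $(\varphi({\mathfrak s}_{p,Y})b)_k=V^k(\varphi(V)y)$. The isometry property then gives the lower bound $(K+1)\|\varphi(V)y\|^p$ and the value $\|b\|^p=K+d+1$, and the ratio $(K+d+1)/(K+1)$ tends to $1$.

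The paper uses the same transference idea: embed the $V$-orbit of $y$ into $\ell^p_{\mathbb Z}(Y)$ and compare $V$ with the shift. It handles the one-sided infinite orbit by damping rather than truncating. For $0<r<1$ it sets $W_r(y)=(\ldots,0,y,rV(y),(rV)^2(y),\ldots)$. This gives the exact intertwining $W_r\varphi(rV)=P\varphi({\mathfrak s}_{p,Y})W_r$, where $P$ is the norm-one projection onto the nonnegative indices. Since $\|W_r(z)\|=(1-r^p)^{-1/p}\|z\|$ for every $z$, this yields $\|\varphi(rV)\|\le\|\varphi({\mathfrak s}_{p,Y})\|$, and the paper concludes by letting $r\to 1$.

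So the paper trades your boundary error for an exact operator identity followed by an Abel-type limit in $r$. Along the way it also gets the bound for every $\varphi(rV)$. Your Calder\'on--Coifman--Weiss style truncation bounds $\varphi(V)$ directly, using only that $\varphi$ has finite degree. Both arguments use nothing about $V$ beyond $\|V^k z\|=\|z\|$, so neither needs surjectivity.
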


\begin{proof}
For simplicity, set $S = {\mathfrak s}_{p,Y}$. 
Let $P\colon \ell^p_{\mathbb Z}(Y) \to \ell^p_{\mathbb Z}(Y)$ 
be the norm-one projection that maps a sequence $(b_k)_{k\in\mathbb Z}$ to
$(\cdots, 0, 0, b_0, b_1, \ldots)$.

Fix a complex polynomial $\varphi$ and take $r\in(0,1)$. Define the operator $W_r\colon Y \to \ell^p_{\mathbb Z}(Y)$ by
$$
W_r(y) = (\cdots, 0, 0, y, rV(y), (rV)^2(y), \ldots), \qquad y \in Y,
$$
where $(rV)^k(y)$ sits in the $k$-th coordinate for $k\ge 0$. One checks directly that, for every $n\ge 0$,
$$
W_r (rV)^n = P S^n W_r.
$$
By linearity, this extends to
$$
W_r \varphi(rV) = P \varphi(S) W_r.
$$
Taking norms gives
$$
\|W_r \varphi(rV)(y)\| \le \|\varphi(S)\| \, \|W_r(y)\|, \qquad y \in Y.
$$

Since $V$ is an isometry, we have
$$
\|W_r(y)\| = \Bigl(\sum_{k=0}^\infty r^{pk} \|V^k(y)\|^p\Bigr)^{1/p}
= \Bigl(\sum_{k=0}^\infty r^{pk}\Bigr)^{1/p} \|y\|, \qquad y \in Y.
$$
Applying the same computation to $\varphi(rV)(y)$ yields
$$
\|\varphi(rV)(y)\| \le \|\varphi(S)\| \, \|y\|, \qquad y \in Y,
$$
and hence $\|\varphi(rV)\| \le \|\varphi(S)\|$. Letting $r \to 1$ completes the proof:
$$
\|\varphi(V)\| \le \|\varphi(S)\|.
\qedhere$$
\end{proof}

\begin{proof}[Proof of Theorem \ref{6SepMain}]
Let $T \in \C_p$. Then $T$ is a positive contraction. 
From the proof of Theorem \ref{5Main}, it is enough to verify the following two properties in order to obtain (1) and (2) almost surely:

\begin{itemize}
\item[(a)] For all $x \in L^p(M)$,
$$
\Bigl(\frac{1}{N}\sum_{n=1}^N T^n(x) - Q_T(x)\Bigr)_{N \ge 1} \in L^p(M; c_0);
$$

\item[(b)] There exists $\varepsilon > 0$ such that for almost every $\omega \in \Omega$, one can find a constant $C_\omega \ge 0$ with
$$
\frac{1}{W_N} \Bignorm{\sum_{n=1}^N Y_n(\omega)\, T^n}_{L^p \to L^p} \le C_\omega N^{-\varepsilon}, 
\qquad N \ge 1.
$$
\end{itemize}

Property (a) follows from \cite[Theorem 1.3]{HRW} together with the proof of \cite[Theorem 6.3]{JX}.

We turn to (b). By the definition of $\C_p$, there exists a sequence $(T_j)_{j \ge 1}$ converging to $T$ in the strong operator topology, where each $T_j$ is a convex combination of positive contractive separating maps. By \cite[Corollary 4.7]{HRW}, for every $j \ge 1$ and $N \ge 1$, there exists a tracial von Neumann algebra $(B_{j,N}, \tau_{j,N})$, contractive maps
$$
J_{j,N}\colon L^p(M) \to L^p(B_{j,N}), \qquad
P_{j,N}\colon L^p(B_{j,N}) \to L^p(M),
$$
and an isometry $V_{j,N}\colon L^p(B_{j,N}) \to L^p(B_{j,N})$ such that
$$
T_j^n = P_{j,N} V_{j,N}^n J_{j,N}, \qquad n = 0,1,\dots,N.
$$
Consequently, for any $\omega \in \Omega$ and all $j,N \ge 1$,
\begin{align*}
\frac{1}{W_N} \sum_{n=1}^N Y_n(\omega)\, T_j^n 
&= P_{j,N} \Biggl(\frac{1}{W_N} \sum_{n=1}^N Y_n(\omega)\, V_{j,N}^n\Biggr) J_{j,N},\\
\frac{1}{W_N} \Bignorm{\sum_{n=1}^N Y_n(\omega)\, T_j^n} 
&\le \frac{1}{W_N} \Bignorm{\sum_{n=1}^N Y_n(\omega)\, V_{j,N}^n}.
\end{align*}
Now consider the direct sums
$$
B := \bigoplus_{j,N \ge 1}^\infty B_{j,N}, \qquad
Y := \bigoplus_{j,N \ge 1}^p L^p(B_{j,N}),
$$
so that $Y = L^p(B)$, where $B$ carries the direct sum trace of the $\tau_{j,N}$. Applying Lemma \ref{6Transfer} to each isometry $V_{j,N}$ and then passing to the limit $j \to \infty$, we obtain
$$
\frac{1}{W_N} \Bignorm{\sum_{n=1}^N Y_n(\omega)\, T^n} \le 
\frac{1}{W_N} \Bignorm{\sum_{n=1}^N Y_n(\omega)\, {\mathfrak s}_{p,Y}^n}, 
\qquad \omega \in \Omega,\ N \ge 1.
$$
Finally, observe that the backward shift $S$ on $\ell^\infty_{\mathbb Z}(A)$ is a positive Dunford–Schwartz operator, and ${\mathfrak s}_{p,Y}$ is its $L^p$-extension $S_p$. Hence $S$ satisfies the assumptions of Theorem \ref{5Main}, and therefore satisfies (\ref{5LacLp}). Thus there exist $\varepsilon > 0$ and a full measure set $\Omega' \subset \Omega$ such that (\ref{5LacLp}) holds for all $\omega \in \Omega'$. This is exactly property (b).
\end{proof}

\begin{remark}\label{6Ack}
This remark concerns the commutative setting. Let $(\Sigma,\mu)$ be a measure space, let $1<p<\infty$, and let $T\colon L^p(\mu)\to L^p(\mu)$ be a positive contraction. By Akcoglu’s dilation theorem \cite{AS}, $T$ admits an isometric dilation on another $L^p$-space $L^p(\mu')$. This allows us to repeat, without change, the argument used in the proof of Theorem \ref{6SepMain}. In particular, there exists a measurable set $\Omega' \subset \Omega$ with $\Pdb(\Omega') = 1$ such that, for every $\omega \in \Omega'$ and every $x \in L^p(\Sigma)$, we have both
\[
\frac{1}{W_N}\sum_{n=1}^N X_n(\omega) T^n(x) \longrightarrow Q_T(x) 
\quad \text{and} \quad
\frac{1}{m}\sum_{k=1}^m T^{n_k(\omega)}(x) \longrightarrow Q_T(x),
\]
for $\mu$-almost every point in $\Sigma$.
\end{remark}

\bigskip\noindent
{\bf Acknowledgement.} The authors are grateful to Oleksiy Klurman for several stimulating and insightful discussions. This work was supported by a London Mathematical Society grant for the Support of Joint Research Groups (Scheme 3) [Ref: 32533], as well as by the Isaac Newton Institute and the Heilbronn Institute through the Additional Funding Programme for Mathematical Sciences, delivered by EPSRC (EP/V521917/1).1). 
This work was also supported by the EIPHI Graduate school (contract ANR-17-EURE-0002).

 \bigskip   
\end{document}